\newdimen\unit\newdimen\psep\newcount\nd\newcount\ndx\newbox\dotb\newbox\ptbox
\newdimen\dx\newdimen\dy\newdimen\dxx\newdimen\dyy\newdimen\hgt 
\newdimen\xoff\newdimen\yoff
\newcommand\clap[1]{\hbox to 0pt{\hss{#1}\hss}}
\newcommand\vdisk[1]{{\font\dotf=cmr10 scaled #1\dotf.}}
\newcommand\varline[2]{\setbox\dotb\hbox{\vdisk{#1}}\xoff=-.5\wd\dotb
\wd\dotb=0pt\yoff=-.5\ht\dotb\psep=#2\ht\dotb}
\newcommand\varpt[1]{\setbox\ptbox\clap{\vdisk{#1}}\setbox\ptbox
\hbox{\raise-.5\ht\ptbox\box\ptbox}}
\newcommand\cpt{\copy\ptbox}
\newcommand\point[3]{\rlap{\kern#1\unit\raise#2\unit\hbox{#3}}}
\newcommand\setnd[4]{\dx=#3\unit\advance\dx-#1\unit\divide\dx by\psep
\dy=#4\unit\advance\dy-#2\unit\divide\dy by\psep \multiply\dx
by\dx\multiply\dy by\dy\advance\dx\dy\nd=1\advance\dx-1sp
\loop\ifnum\dx>0\advance\dx-\nd sp\advance\nd1\advance\dx-\nd
sp\repeat}
\newcommand\dline[5]{{\nd=#5\hgt=#2\unit\dx=#3\unit\advance\dx-#1\unit
\divide\dx by\nd\dy=#4\unit\advance\dy-#2\unit\divide\dy by\nd
\advance\hgt\yoff\rlap{\kern#1\unit\kern\xoff\loop\ifnum\nd>1\advance\nd-1
\advance\hgt\dy\kern\dx\raise\hgt\copy\dotb\repeat}}}
\newcommand\qellip[4]{{\setnd{0}{0}{#3}{#4}\dx=\unit\dy=0pt\raise\yoff\rlap{%
\kern#1\unit\kern\xoff\raise#2\unit\hbox{\loop\ifnum\dx>0\rlap{\kern#3\dx
\raise#4\dy\copy\dotb}\hgt=\dx\divide\hgt
by\nd\advance\dy\hgt\hgt=\dy \divide\hgt
by\nd\advance\dx-\hgt\repeat\rlap{\raise#4\dy\copy\dotb}}}}}
\newcommand\bez[6]{{\setnd{#1}{#2}{#3}{#4}\ndx=\nd\setnd{#3}{#4}{#5}{#6}
\ifnum\ndx>\nd\nd=\ndx\fi\dx=#3\unit\advance\dx-#1\unit\dy=#4\unit
\advance\dy-#2\unit\dxx=#5\unit\advance\dxx-#1\unit\dyy=#6\unit\advance
\dyy-#2\unit\advance\dxx-2\dx\advance\dyy-2\dy\divide\dxx
by\nd\divide\dyy
by\nd\advance\dx.25\dxx\advance\dy.25\dyy\divide\dx
by\nd\divide\dy by\nd \multiply\nd
by2\dx=100\dx\dy=100\dy\dxx=100\dxx\dyy=100\dyy\divide\dxx by\nd
\divide\dyy
by\nd\hgt=#2\unit\raise\yoff\rlap{\kern#1\unit\kern\xoff
\raise\hgt\copy\dotb\loop\ifnum\nd>0\advance\nd-1\advance\hgt0.01\dy
\kern0.01\dx\raise\hgt\copy\dotb\advance\dx\dxx\advance\dy\dyy\repeat}}}
\newcommand\ptu[3]{\point{#1}{#2}{\cpt\raise1ex\clap{$\scriptstyle{#3}$}}}
\newcommand\ptd[3]{\point{#1}{#2}{\cpt\raise-1.8ex\clap{$\scriptstyle{#3}$}}}
\newcommand\ptr[3]{\point{#1}{#2}{\cpt\raise-.4ex\rlap{$\ \scriptstyle{#3}$}}}
\newcommand\ptl[3]{\point{#1}{#2}{\cpt\raise-.4ex\llap{$\scriptstyle{#3}\ $}}}
\newcommand\ptlu[3]{\point{#1}{#2}{\raise.8ex\clap{$\scriptstyle{#3}$}}}
\newcommand\ptld[3]{\point{#1}{#2}{\raise-1.6ex\clap{$\scriptstyle{#3}$}}}
\newcommand\ptlr[3]{\point{#1}{#2}{\raise-.4ex\rlap{$\,\scriptstyle{#3}$}}}
\newcommand\ptll[3]{\point{#1}{#2}{\raise-.4ex\llap{$\scriptstyle{#3}\,$}}}
\newcommand\thnline{\varline{400}{.6}}
\newtheorem{thm}{Theorem}
\newtheorem*{vBKlemma}{The van den Berg--Kesten Lemma}
\newtheorem{conj}{Conjecture}
\newtheorem{prob}{Problem}
\newtheorem{lemma}[thm]{Lemma}
\newtheorem{prop}[thm]{Proposition}
\newtheorem{obs}[thm]{Observation}
\theoremstyle{definition}
\theoremstyle{definition}\newtheorem*{defn}{Definition}
\theoremstyle{definition}
\theoremstyle{definition}
\newcommand{\ds}{\displaystyle}
\def\A{\mathcal{A}}
\def\B{\mathcal{B}}
\def\C{\mathcal{C}}
\def\HH{\mathcal{H}}
\def\N{\mathbb{N}}
\def\Pr{\mathbb{P}}
\def\RR{\mathbb{R}}
\def\ZZ{\mathbb{Z}}
\def\a{\mathbf{a}}
\def\b{\mathbf{b}}
\def\le{\leqslant}
\def\ge{\geqslant}
\def\eps{\varepsilon}
\def\->{\rightarrow}
\def\<{\langle}
\def\>{\rangle}
\def\lg{\textup{lg}}
\def\sht{\textup{sh}}
\def\poly{\textup{poly}}
\begin{document}
\title[Bootstrap percolation in two dimensions]{A sharper threshold for bootstrap percolation in two dimensions}

\author{Janko Gravner}
\address{Mathematics Department, University of California, Davis, CA 95616, USA} \email{gravner@math.ucdavis.edu}

\author{Alexander E. Holroyd}
\address{Microsoft Research, 1 Microsoft Way, Redmond, WA 98052, USA; and University of British Columbia, 121-1984 Mathematics Road, Vancouver, BC V6T 1Z2, Canada} \email{holroyd at math.ubc.ca}

\author{Robert Morris}
\address{IMPA, Estrada Dona Castorina 110, Jardim Bot\^anico, Rio de Janeiro, RJ, Brasil} \email{rob@impa.br}
\thanks{Supported by NSF grant DMS 0204376 and the Republic of Slovenia Ministry of Science program
P1-285 (JG); NSERC and Microsoft Research (AEH); a JSPS Fellowship and a
Research Fellowship from Murray Edwards College, Cambridge (RM)}

\begin{abstract}
Two-dimensional bootstrap percolation is a cellular automaton in which sites become `infected' by contact with two or more already infected nearest
neighbours. We consider these dynamics, which can be interpreted as a monotone version of the Ising model, on an $n\times n$ square, with sites
initially infected independently with probability $p$. The critical probability $p_c$ is the smallest $p$ for which the probability that the entire square is eventually infected exceeds $1/2$. Holroyd determined the sharp first-order approximation: $p_c\sim \pi^2/(18\log n)$ as $n\to\infty$. Here we sharpen this result, proving that the second term in the expansion is $-(\log n)^{-3/2+ o(1)}$, and moreover determining it up to a $\poly(\log\log n)$-factor.  The exponent $-3/2$ corrects numerical predictions from the physics literature.
\end{abstract}

\maketitle

\section{Introduction}\label{intro}

Bootstrap percolation is a cellular automaton in which, given a (typically random) initial set of `infected' vertices in a graph $G$, new vertices are infected at each time step if they have at least $r$ infected neighbours. In this paper we shall study two-neighbour bootstrap percolation on the square grid $[n]^2$. We shall determine the second term of the critical threshold for percolation up to a $\poly(\log\log n)$-factor, and hence confirm a conjecture of Gravner and Holroyd~\cite{GH2}.

We begin by defining the bootstrap process, which was introduced by Chalupa, Leath and Reich~\cite{CLR} in 1979. Let $G$ be a graph with vertex set $V(G)$, and for each vertex $v \in V(G)$, let $N(v)$ denote the set of neighbours of $v$ in $G$. Now, given an integer $r \in \N$, and a set of initially infected vertices $A \subset V(G)$, define $A_t$ recursively as follows: set $A_0 = A$, and
$$A_{t+1} \; = \; A_t \cup \big\{v \in V(G) : |N(v) \cap A_t| \ge r \big\}$$
for each integer $t \ge 0$. We say that the vertices of $A_t$ have been infected by time $t$. Let $[ A ] = \bigcup_t A_t$ denote the closure of $A$ under the $r$-neighbour bootstrap process, and say that the set $A$ \emph{percolates} if the entire vertex set is eventually infected, i.e., if $[A] = V(G)$.

We shall be interested in the case in which $A$ is a random subset of $V(G)$. More precisely, let us choose the elements of $A$ independently at random, each with probability $p$, and denote by $\Pr_p$ the corresponding probability measure. Throughout the paper, $A$ will be assumed to be a random subset selected according to this distribution, unless otherwise stated. It is clear that the probability of percolation is increasing in $p$, and so we define the critical probability, $p_c(G,r)$ as follows:
$$p_c(G,r) \; := \; \inf \Big\{ p \,:\, \Pr_p\big(A \textup{ percolates in the $r$-neighbour process on $G$}\big) \ge 1/2 \Big\}.$$
Our aim is to give sharp bounds on $p_c(G,r)$.

Bootstrap percolation has been studied extensively by mathematicians~\cite{AL,d=r=3,CC,Hol,Sch}, as well as by physicists~\cite{ALev, BDGM,GLBD} and sociologists~\cite{Gran,Watts}, amongst others. The bootstrap process was originally introduced in the context of disordered magnetic systems, and may be thought of as a monotone version of the Glauber dynamics of the Ising model. Indeed, if spins are allowed to flip in only one direction (from $-$ to $+$, say), and if they flip only if they have at least $r$ neighbours in state $+$, then one immediately obtains the cellular automaton described above. We refer the interested reader to the applications of bootstrap percolation in~\cite{FSS,M}, and the closely related models studied in~\cite{CM2,DS1,DS2,GG,KS,Rich}.

We focus on the graph $G=[n]^d$ with vertex set $\{1,\ldots,n\}^d$, and with an edge between vertices $u$ and $v$ if and only if $\|u-v\|_1=1$. Aizenman and Lebowitz~\cite{AL} determined the asymptotic behaviour of $p_c([n]^d,2)$ up to multiplicative constants, and Cerf and Cirillo~\cite{CC} (in the crucial case $d = r = 3$) and Cerf and Manzo~\cite{CM} proved the corresponding result for all $d \ge r \ge 2$. The first sharp threshold for bootstrap percolation was proved by Holroyd~\cite{Hol}, who showed that
\begin{equation}\label{h}
p_c([n]^2,2) \; = \; \frac{\pi^2}{18\log n} \,+\, o\left( \frac{1}{\log n} \right).
\end{equation}
This was the first result of its type, and has prompted a flurry of generalizations. Sharp thresholds have since been determined for $p_c([n]^d,r)$ for all fixed $d$ and $r$~\cite{d=r=3,alldr}, for more general update rules in two dimensions~\cite{DCH}, and in high dimensions (i.e., $d = d(n) \to \infty$ sufficiently fast) for the majority rule, i.e., $r = d$~\cite{Maj}, and in the case $r = 2$~\cite{cube}. Some of the techniques from these papers have been used to prove results about the Glauber dynamics of the Ising model~\cite{FSS,M}. The bootstrap process has also been studied on infinite trees~\cite{BPP,BS,FS}, on `locally tree-like' graphs~\cite{Maj}, on the random regular graph~\cite{BP,Svante}, and on the Erd\H{o}s-R\'enyi random graph $G_{n,p}$~\cite{Tomas}.

In this paper we shall study the two-neighbour bootstrap process on the graph $G = [n]^2$ in more detail.  One of the most striking facts about the result \eqref{h} stated above is that it contradicted estimates of $\ds\lim_{n \to \infty} {p_c \log n}$ given by simulations - in fact, such estimates were out by a factor of more than two.  (See, for example,~\cite{GH1} or~\cite{GLBD} for a discussion of the reasons behind these discrepancies.) Gravner and Holroyd~\cite{GH1} gave a rigorous (partial) explanation for this phenomenon, by giving the following improvement of \eqref{h}:
$$p_c([n]^2,2) \; \le \; \frac{\pi^2}{18\log n} \,-\, \frac{c}{(\log n)^{3/2}},$$
where $c > 0$ is a small constant. In \cite{GH2}, the same authors proved an almost matching lower bound for a simpler model (called `local' bootstrap percolation), and conjectured that the upper bound is essentially sharp for the usual bootstrap process.

\begin{conj}[Gravner and Holroyd~\cite{GH2}]\label{GHconj}
For every $\eps > 0$, if $n$ is sufficiently large then
$$p_c([n]^2,2) \; \ge \; \frac{\pi^2}{18\log n} \,-\, \frac{1}{(\log n)^{3/2-\eps}}.$$
\end{conj}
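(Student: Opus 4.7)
The plan is to follow Holroyd's~\cite{Hol} proof of the lower bound in~\eqref{h}, sharpening it with a refined analysis of the crossing probability that exposes a sub-leading correction to the growth rate.

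First I would invoke the Aizenman--Lebowitz lemma: if $A$ percolates in $[n]^2$, then at every scale there must exist an internally spanned rectangle, and in particular one of semi-perimeter of critical order $\Theta(1/p)$. By a union bound, it therefore suffices to prove
$$n^2 \cdot \max_R \Pr_p\bigl[R \text{ is internally spanned}\bigr] \;<\; \tfrac12,$$
where the maximum is over rectangles $R \subset [n]^2$ of semi-perimeter $\Theta(1/p)$ and $p = \pi^2/(18\log n) - (\log n)^{-3/2+\eps}$. This reduces the problem to a sharp upper bound on the spanning probability of a single critical droplet.

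Next I would set up Holroyd's \emph{hierarchy} decomposition of the spanning event, which bounds the probability of internal spanning by a sum, over tree-structured growth histories $\HH$, of products of single-step "crossing" probabilities together with "seed" probabilities for the leaves. Holroyd's key input is an upper bound of the form $\exp(-\ell\,\beta(p))$ for the probability that a row of length $\ell$ is crossed, with $\int_0^1 \beta(q)\,dq = \pi^2/18$. The heart of my proposed proof would be the sharpened estimate
$$\Pr_p\bigl[\text{row of length } \ell \text{ is crossed}\bigr] \,\le\, \exp\!\bigl(-\ell\,(\beta(p)+\gamma(p))\bigr),$$
where $\gamma(p) = \Omega(\sqrt{p})$ is a strictly positive correction. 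The function $\gamma$ should arise by accounting for the \emph{scarcity} of "double seeds" --- short clusters of occupied sites in adjacent rows --- which Holroyd's proof effectively discards but which are the only configurations that accelerate row growth at this scale. A disjoint-occurrence argument via Reimer's theorem, combined with an anti-concentration bound showing that too many rows cannot simultaneously enjoy cheap seeds, should yield the required $\gamma(p)$.

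Integrating the refined crossing rate over the $\Theta(1/p)$ growth steps of a critical droplet, the correction $\gamma$ adds $\Theta(p^{3/2})$ to the total crossing cost. Solving for the threshold then yields the correction $-(\log n)^{-3/2} \cdot \poly(\log\log n)$ to $p_c$, matching the conjecture up to the $\eps$ in the exponent. The main obstacle, I expect, is robustness under the hierarchy sum: Holroyd's proof tolerates lossy estimates at each node because the margin in~\eqref{h} is $\Theta(1)$, whereas here the margin is only $O(\sqrt{p})$, so every inequality must be tightened in lock-step. In particular, the refined crossing bound must hold uniformly over all rows (not just typical ones), and the seed savings across disjoint row bands must be shown to be essentially independent; the careful bookkeeping needed to close this loop is where I would expect the bulk of the technical work to lie.
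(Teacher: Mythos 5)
Your high-level scaffolding is right (Aizenman--Lebowitz reduction, union bound over critically-sized rectangles, Holroyd's hierarchy decomposition), but the key lemma you propose is the wrong one, and I don't think it can be made to work.

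Your central claim is a sharpened crossing estimate of the form $\Pr_p[\text{row of length }\ell\text{ crossed}]\le\exp\bigl(-\ell(\beta(p)+\gamma(p))\bigr)$ with $\gamma(p)=\Omega(\sqrt p)$. For two-neighbour bootstrap percolation, a rectangle is crossed left-to-right if and only if it contains no pair of adjacent empty columns and its final column is occupied; this is an explicit Markov-chain event whose probability is $e^{-\ell g(bq)+O(1)}$, with no room for an extra polynomial-in-$p$ correction in the exponent. Worse, the ``double seed'' configurations you want to discard only make spanning \emph{more} likely: they appear as the \emph{positive} correction term $qst\,e^{2g(bq)+2g(aq)}$ in Lemma~\ref{crossing}, which must be carried along when \emph{upper}-bounding the spanning probability. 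Discarding them would bound a strict sub-event, not the event $D(R,R')$ itself. (The double-seed mechanism is precisely what Gravner and Holroyd exploit to prove the \emph{upper} bound on $p_c$ in~\cite{GH1} --- the complementary direction to what you are trying to show.) So there is no hidden $\gamma(p)$ to be extracted from the crossing step, and the ``anti-concentration over rows'' part of your plan has nothing to latch onto.

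The actual improvement comes from two quite different ideas, neither of which appears in your sketch. First, the hierarchy is made much \emph{finer}: the step size $T$ is taken to be $\sqrt q$ rather than $\Theta(1)$, so the per-step error in comparing crossing probabilities with the variational path integral $U(S,R)$ shrinks, and summed over all steps it comes to only $O\bigl(q^{-1/2}(\log\tfrac1q)^3\bigr)$ --- exactly the scale of the claimed second-order term. Second, the resulting combinatorial explosion (exponentially many fine hierarchies) is tamed by a new bound on seeds: a spanning set of a rectangle with semi-perimeter $\phi$ must have at least $\phi/2$ elements (Lemma~\ref{ex}), which gives $\Pr_p(I(R_u))\le 3^{\phi(R_u)}\exp(-\phi(R_u)g(Zq))$ for seeds (Lemma~\ref{seeds}). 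Hierarchies with many large seeds are thus so improbable that they contribute negligibly to the sum, and only the hierarchies with $O(1)$ large seeds --- of which there are few --- contribute at the leading order. Your proposal, as written, would have to be restructured around these two points to close the argument.
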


In this paper we shall prove Conjecture~\ref{GHconj} in a slightly stronger form. To be precise, we shall prove the lower bound in the following theorem; the upper bound was proved in~\cite{GH1}.

\begin{thm}\label{sharper}
There exist constants $C > 0$ and $c > 0$ such that
$$\frac{\pi^2}{18 \log n} \, - \, \frac{C(\log\log n)^3} {(\log n)^{3/2}} \; \le \; p_c([n]^2,2) \; \le \; \frac{\pi^2}{18 \log n} \, - \, \frac{c} {(\log n)^{3/2}}$$
for every sufficiently large $n \in \N$.
\end{thm}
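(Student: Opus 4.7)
Since the upper bound is already established in~\cite{GH1}, we focus on the matching lower bound, namely $p_c([n]^2,2) \ge \pi^2/(18\log n) - C(\log\log n)^3/(\log n)^{3/2}$. The plan is to refine Holroyd's hierarchy method from~\cite{Hol} to capture the second-order term.

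Fix $p = \pi^2/(18\log n) - C(\log\log n)^3/(\log n)^{3/2}$ with $C$ to be chosen large; the goal is to show $\Pr_p(A \textup{ percolates in } [n]^2) < 1/2$. By the Aizenman--Lebowitz lemma, if $A$ percolates then there exists an internally spanned rectangle $R \subset [n]^2$ (meaning $[A \cap R] = R$) whose both dimensions lie in $[T, 2T]$ for any prescribed $T$, and the natural choice is $T \asymp 1/p$. A union bound over positions and sizes reduces the problem to proving
$$I(R) \; := \; \Pr_p\bigl([A \cap R] = R\bigr) \; \le \; \exp\!\left( - \frac{\pi^2}{9p} \; - \; \frac{c_0}{p^{1/2}\,(\log(1/p))^{O(1)}} \right)$$
for every such critical $R$, where $c_0 > 0$.

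To bound $I(R)$ the workhorse is the \emph{hierarchy method}. A hierarchy for $R$ is a rooted tree of nested rectangles encoding a possible history by which $R$ is built up from disjoint seed configurations via a sequence of single-step growths (adjoining one row or column), punctuated by merges with smaller internally spanned rectangles (``helpers''). Applying Reimer's theorem, $I(R)$ is bounded by a sum, over hierarchies, of a product of \emph{increment factors}. A single growth step from $(a,b)$ to $(a+1,b)$ contributes a factor at most $1 - q^b$ (where $q = 1-p$), and summing along a hierarchy yields a discrete version of the continuum functional
$$\Phi\bigl(a(\cdot),b(\cdot)\bigr) \; = \; \frac{1}{p}\int \bigl[\,g(p\,a(t)) + g(p\,b(t))\,\bigr]\,dt$$
on the growth trajectory $(a(t),b(t))$, where $g(z) = -\log(1 - e^{-2z})$. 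Holroyd's variational analysis shows that $\Phi$ attains its minimum $\pi^2/(9p)$ along a specific extremal curve, giving the leading-order bound $I(R) \le \exp(-\pi^2/(9p) + o(1/p))$.

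The second-order improvement must come from showing that, even exploiting all cooperative shortcuts permitted by hierarchies (double jumps, staircase seeds, $1$-by-$k$ helpers, and so on), no internally spanned droplet can beat the extremal trajectory by more than $c_0/\bigl(p^{1/2}(\log(1/p))^{O(1)}\bigr)$ in the exponent. The hard part of the proof will be exactly this quantitative refinement: one must track the second-order fluctuations of the growth trajectory around the extremal curve, sharpen the estimates on the corner-filling function $\beta$ used in~\cite{Hol} so that they control the $\Theta(p^{1/2})$-scale deviation, and combinatorially control the number of relevant hierarchies. The $(\log\log n)^3$ factor in the theorem statement is expected to arise from the combined loss in hierarchy enumeration together with the $\log(1/p)$ powers appearing in the perturbation analysis around the extremal curve. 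Once the lower bound on $I(R)$ is in place, choosing $C$ sufficiently large makes $n^2 \cdot O(1/p^2) \cdot I(R) < 1/2$, which completes the proof.
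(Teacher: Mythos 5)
Your high-level outline is on the right track: reduce to a union bound over critically-sized rectangles via Aizenman--Lebowitz, use hierarchies and the van den Berg--Kesten inequality (not Reimer; the paper only needs the easier BK lemma since the relevant events are increasing), compare the sum of increment factors to the variational functional whose minimum is $\pi^2/(9q)$. But the proposal stops at precisely the point where all the new work of the paper happens, and it names none of the mechanisms that actually produce the $(\log n)^{-3/2}$ term, so as written it would not yield Theorem~\ref{sharper}.

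The genuine gap is twofold. First, to beat the first-order term one must take a \emph{finer} hierarchy, with steps of relative size $T\asymp \sqrt{q}$ rather than $\asymp q$. This is what makes the Gravner--Holroyd crossing estimate (Lemma~\ref{crossing}) lose only $O(1/\sqrt q)\,\poly\log(1/q)$ per hierarchy rather than $O(1/q)$. But with $T=\sqrt q$ the number of hierarchies blows up to roughly $\exp(q^{-1/2}\poly\log(1/q))$, and the proposal does not say how to control this. The paper's answer is the ``large seed'' dichotomy: every non-leaf vertex of a good hierarchy lies above a large seed (one with semiperimeter at least $Z/3$, where $Z=q^{-1/2}\log^3(1/q)$), so the number of vertices is $O(m(\HH)\cdot h(\HH))$ where $m(\HH)$ counts large seeds. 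Second, and crucially, each large seed costs a lot: Lemma~\ref{seeds}, based on the elementary fact that any set internally spanning a rectangle $R$ must have at least $\phi(R)/2$ elements, gives $\Pr_p(I(R_u))\le 3^{\phi(R_u)}\exp(-\phi(R_u)g(Zq))$ for seeds $R_u$. This penalty is what makes the sum over hierarchies with many large seeds geometrically small and lets the whole argument close. The proposal never mentions this seed lemma, which the paper singles out as \emph{the} new ingredient (and the obstruction for the modified model, where it fails).

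A couple of smaller but real errors: the Aizenman--Lebowitz lemma controls only the longer side-length of the spanned rectangle, not both (so you cannot assume both dimensions lie in $[T,2T]$; the paper must separately dispatch the case $\sht(R)\le 1/q$). And the function appearing in the standard model is $g(z)=-\log\beta(1-e^{-z})$ with $\beta(u)=(u+\sqrt{u(4-3u)})/2$, arising from the no-double-gap crossing event; your $g(z)=-\log(1-e^{-2z})$ and the increment $1-q^b$ are the analogues for the Frob\"ose (no empty column) model, not for two-neighbour bootstrap.
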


Our result again corrects numerical predictions, this time for the power of $\log n$ in the second term.  Specifically, in work reported in \cite{ALev}, Stauffer interpolated between simulations and the rigorous result \eqref{h} to obtain the prediction
$$p_c([n]^2,2) \approx \frac{\pi^2}{18 \log n} \, - \, \frac{0.45} {(\log n)^{1.2}},$$
which is inconsistent with the lower bound in Theorem \ref{sharper} in the limit $n\to\infty$. Since $3/2>1.2$, the true asymptotic approach to the first approximation $\pi^2/(18\log n)$ is {\em faster} than in the above numerical prediction. Furthermore, in Section \ref{Qsec} we shall discuss how the proof of Theorem~\ref{sharper} can be adapted to a variant of bootstrap percolation called the Frob\"ose model. In this case, the resulting power $3/2$ of $\log n$ corrects the prediction $1.388$ made in \cite{GLBD} on the basis of computer calculations for the local Frob\"ose model.

The proof of Theorem~\ref{sharper} will use many of the tools and techniques of~\cite{Hol}, together with some of the ideas of~\cite{GH2}, and some new ideas. In particular, we shall bound the probability of percolation by the expected number of `good' and `satisfied' hierarchies (see Lemma~\ref{basic}, below). We will define a hierarchy as in~\cite{Hol} (see Section~\ref{hiersec}), except that our hierarchies will be much finer, each step being of order $1/\sqrt{p}$, instead of $1/p$. This means that we will have far too many hierarchies; however, almost all of these have many `large' seeds, and we shall show that these contribute a negligible amount to the sum. In order to do so, we shall need a better bound on the probability that a seed is internally spanned than the straightforward bound that sufficed in~\cite{Hol}. Fortunately, the bound we need follows easily from the simple (folklore) fact that a spanning set for a rectangle $R$ must contain no fewer than $\phi(R)/2$ elements, where $\phi(R)$ denotes the semi-perimeter of $R$ (see Lemmas~\ref{seeds} and~\ref{ex}).  Surprisingly, it appears that our proof does not extend directly to the ``modified'' bootstrap percolation model; it is the analogous bound for seeds that is missing in this case (see Section \ref{Qsec} for more information).

We finish this section by making a few definitions which we shall use throughout the proof. First, we say a set $S$ is \emph{spanned} by the set $A$ if $S \subset [A]$, and that $S$ is \emph{internally spanned} by $A$ if $S \subset [A \cap S]$. An \emph{event} is simply a collection $\A$ of subsets of $[n]^2$; we say $\A$ holds if $A \in \A$. In order to keep our formulae relatively compact, we shall sometimes write $I(S)$ for the event that $S$ is internally spanned by $A$.

Next, define two functions, $\beta$ and $g$, by
$$\beta(u) \; := \; \ds\frac{u + \sqrt{u(4-3u)}}{2} \hspace{0.5cm} \textup{and} \hspace{0.5cm} g(z) \; := \; - \log\left( \beta\left( 1 - e^{-z} \right) \right).$$
We remark that $\beta$ is increasing on $[0,1]$, and so $g$ is decreasing on $(0,\infty)$, and that $g(z) \le 2e^{-z}$ when $z$ is large (see Proposition~3 of~\cite{d=r=3}). Note that $\beta(u) \sim \sqrt{u}$ as $u \to 0$, and so $g(z) \sim - \log \sqrt{z}$ as $z \to 0$, where $g(z) \sim h(z)$ means that $g(z)/h(z) \to 1$.

A \emph{rectangle} is a set of the form
$$R=[(a,b),(c,d)] \; := \; \big\{ (x,y) \,:\, a \le x \le c, \, b \le y \le d \big\}\subset\ZZ^2,$$
where $a,b,c,d \in \ZZ$. The dimensions of $R$ are $\dim(R) = (c-a+1,d-b+1)$, the long and short side-lengths of $R$ are respectively $\sht(R) = \min\{c-a+1,d-b+1\}$ and $\lg(R) = \max\{c-a+1,d-b+1\}$, and the semi-perimeter of $R$ is $\phi(R)=\sht(R)+\lg(R)$.

We say that a rectangle $R = [(a,b),(c,d)]$ is \emph{crossed from left-to-right} by $A \subset R$ if
$$R \; \subset \; \left[ A \cup \big\{(x,y) \in \ZZ^2 \, : \, x \le a-1 \big\} \right],$$
i.e., if $R$ is spanned by $A$ together with the set of all sites to the left of $R$. Note that this is equivalent to there being no `double gap' (i.e., no adjacent `empty' pair of columns) in $R$, and the final column being occupied. (Here `empty' means `contains no element of $A$' and `occupied' means `not empty'.)

For each $p \in (0,1)$, let $q = -\log(1 - p)$, so that $p \sim q$ as $p \to 0$. To motivate this definition (and the definition of $g(z)$, above), note (from Lemma~8 of~\cite{Hol}) that for any rectangle $R$ with dimensions $(a,b)$, then
$$\Pr_p(A \textup{ crosses }R\textup{ from left-to-right}) \; \le \; e^{-ag(bq)}.$$

We shall use the notation $f(\mathbf{x} ) = O\big( h(\mathbf{x}) \big)$ throughout to mean that there exists an absolute constant $C > 0$, independent of all other variables (unless otherwise stated), such that $f(\mathbf{x}) \le C h( \mathbf{x} )$ for all $\mathbf{x} = (x_1,\ldots,x_k)$. If the constant $C$ depends on some other parameter $y$, then we shall write $f(\mathbf{x}) = O_y\big( h(\mathbf{x}) \big)$.

We shall write $\RR_+$ for the positive reals, and if $\a,\b \in \RR^2$, then we shall write $\a \le \b$ if $a_1 \le b_1$ and $a_2 \le b_2$. Thus a path in $\RR^2$ is `increasing' if it is increasing in both coordinates. Finally, if $G$ is a directed graph, then $N_G^\->(v)$ will denote the set of out-neighbours of a vertex $v$ in $G$.

The rest of the paper is organised as follows. In Section~\ref{seedsec} we give an upper bound on the probability that a sufficiently small rectangle (a seed) is internally spanned. In Section~\ref{hiersec} we recall from~\cite{Hol} the notion of a hierarchy, which is fundamental to the proof of Theorem~\ref{sharper}, together with some important lemmas from~\cite{GH2} and~\cite{Hol}. In Section~\ref{proofsec} we prove Theorem~\ref{sharper}, and in Section~\ref{Qsec} we mention some open questions.

\section{A lemma on seeds}\label{seedsec}

In this section we shall prove the following lemma, which bounds the probability that a small rectangle is internally spanned. Recall that $q = -\log(1 - p)$.

\begin{lemma}\label{seeds}
There exists $\delta > 0$ such that, for any $p > 0$, and any rectangle $R$ with $\dim(R)=(a,b)$, where $a \le b$ and $ap \le \delta$ then
$$\Pr_p\big( [A \cap R] = R \big) \; \le \; 3^{\phi(R)} \exp\Big( -  \phi(R) g(aq)  \Big).$$
\end{lemma}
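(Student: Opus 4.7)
The plan is to combine the folklore semi-perimeter bound for spanning sets with a crude binomial estimate; no crossing probabilities or van den Berg--Kesten--style tools are required.

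First I would invoke the folklore fact: under the $2$-neighbour bootstrap process the perimeter of the infected set is non-increasing (each newly-infected site removes at least two boundary edges and adds at most two), so any $A'$ spanning a rectangle $R$ must satisfy $4|A'| \ge 2\phi(R)$, i.e.\ $|A'| \ge \phi(R)/2$. Since sites just outside a rectangle have at most one neighbour inside it, the closure $[A \cap R]$ stays inside $R$; hence $[A\cap R]=R$ means precisely that $A\cap R$ spans $R$ internally, and in particular the perimeter of the final infected region is exactly $2\phi(R)$. Consequently this event forces $|A \cap R| \ge \lceil(a+b)/2\rceil$, and a union bound over the locations of these seeds gives
\[
\Pr_p\big([A \cap R] = R\big) \;\le\; \binom{ab}{\lceil(a+b)/2\rceil}\, p^{\lceil(a+b)/2\rceil}.
\]

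Next I would apply the standard estimate $\binom{n}{k} \le (en/k)^k$ together with $ab/(a+b) \le a$ (which is where the hypothesis $a \le b$ enters: it ensures $b/(a+b) \le 1$, and hence the estimate becomes sharp in $a$) to deduce
\[
\Pr_p\big([A \cap R] = R\big) \;\le\; (2eap)^{\lceil(a+b)/2\rceil}.
\]
The target bound factors as $\big(9\beta(1-e^{-aq})^2\big)^{(a+b)/2}$, so -- noting the conclusion is vacuous unless $9\beta(1-e^{-aq})^2 \le 1$, and using $\lceil(a+b)/2\rceil \ge (a+b)/2$ -- it suffices to verify the single inequality
\[
2eap \;\le\; 9\,\beta(1-e^{-aq})^2.
\]
A direct computation from $\beta(u)=(u+\sqrt{u(4-3u)})/2$ shows $\beta(u)^2 \ge u$ on $[0,1]$ (the inequality reduces to $\sqrt{u(4-3u)} \ge u$, i.e.\ $u \le 1$), and combining this with the elementary estimate $1-e^{-x} \ge x(1-x/2)$ yields $\beta(1-e^{-aq})^2 \ge aq(1-aq/2)$. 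Since $q \ge p$ always and $2e < 9$, the required inequality holds as soon as $aq$ is sufficiently small; this is guaranteed by choosing $\delta$ small enough, using $q \le 2p$ whenever $p \le 1/2$ (which itself follows from $ap \le \delta$).

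The only potential obstacle here is simply noticing that the naive binomial bound derived from the semi-perimeter inequality is already strong enough: once this is seen, the proof reduces to three elementary estimates, and none of the finer crossing or spanning-hierarchy machinery developed later in the paper is needed for this lemma.
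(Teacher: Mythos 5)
Your proof is correct and follows essentially the same route as the paper: invoke the folklore perimeter-monotonicity bound $|A\cap R|\ge\phi(R)/2$, then apply a binomial union bound and compare against the target. The main idea -- that the semi-perimeter lower bound on spanning sets already gives a strong enough estimate -- is exactly the paper's. The technical differences are minor but both are genuine simplifications: the paper bounds $\Pr_p([A\cap R]=R)\le\sum_{m\ge(a+b)/2}\binom{ab}{m}p^m$ and sums a geometric series, whereas you observe directly that $|A\cap R|\ge m_0$ is contained in the union over $m_0$-subsets $S\subset R$ of $\{S\subset A\}$, giving the single-term bound $\binom{ab}{m_0}p^{m_0}$; and the paper passes through Observation~\ref{gobs} to relate $(aq)^{(a+b)/2}$ to $e^{-\phi(R)g(aq)}$, whereas you avoid that by using the explicit inequalities $\beta(u)^2\ge u$ and $1-e^{-x}\ge x(1-x/2)$ directly on the definition of $g$. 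Both buy a slightly cleaner verification of the same elementary fact; the dependency on $\delta$ is handled equivalently in both.
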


We begin by recalling a lovely and well-known exercise for high school students (see~\cite{CTM} or~\cite{Wink}, for example).
Lemma~\ref{seeds} follows from it almost immediately.

\begin{lemma}\label{ex}
If $R$ is a rectangle, and $A$ internally spans $R$, then $|A \cap R| \ge \phi(R)/2$.
\end{lemma}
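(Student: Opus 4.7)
The plan is to use the standard perimeter-tracking argument, which is the well-known ``high school exercise'' being alluded to. For a finite set $S \subset \ZZ^2$, define the perimeter $P(S)$ to be the number of edges of the grid $\ZZ^2$ with exactly one endpoint in $S$. Two observations are essentially immediate: a single cell has perimeter $4$, so $P(A \cap R) \le 4|A \cap R|$; and the rectangle $R$ of dimensions $(a,b)$ satisfies $P(R) = 2(a+b) = 2\phi(R)$.

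The main step, and the real content of the argument, is to verify that $P$ is non-increasing under the $2$-neighbour bootstrap dynamics. When a new site $v$ is added to the currently infected set, it has $k \ge 2$ already-infected neighbours in $\ZZ^2$; each of the corresponding $k$ edges was previously on the boundary and is no longer, while the remaining $4-k$ edges incident to $v$ become new boundary edges. Thus the net change in $P$ is $(4-k) - k = 4 - 2k \le 0$. The one mild subtlety is to note that this accounting is valid whether or not $v$ lies on the boundary of $R$: if $v$ has a neighbour outside $R$, that edge is simply counted as a new boundary edge once $v$ is infected, and the $4-2k$ bound still applies.

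Combining these facts along the entire sequence of infections from $A \cap R$ to its closure $[A\cap R] = R$ gives
$$ 2\phi(R) \;=\; P(R) \;\le\; P(A \cap R) \;\le\; 4|A \cap R|,$$
which immediately yields $|A \cap R| \ge \phi(R)/2$. There is no substantive obstacle: the argument reduces to the single observation that every newly infected site destroys at least as many boundary edges as it creates.
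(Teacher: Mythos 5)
Your proof is correct and is exactly the standard perimeter-monotonicity argument that the paper alludes to (the paper states Lemma~\ref{ex} as a well-known exercise and does not spell out the proof). The accounting $4-2k\le 0$ is valid, and the chain $2\phi(R)=P(R)\le P(A\cap R)\le 4|A\cap R|$ gives the result.
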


We also make a simple observation.

\begin{obs}\label{gobs}
If $z > 0$ is sufficiently small then
$$\log(1/\sqrt{z}) \,-\, \sqrt{z} \; \le \; g(z) \; \le \; \log(1/\sqrt{z}) \,+\, z.$$
\end{obs}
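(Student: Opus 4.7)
The plan is to extract a two-term Taylor expansion of $g(z)$ as $z\to 0$. Although the factor $\beta(u)\sim\sqrt u$ is not smooth at $u=0$, the ratio $\beta(u)/\sqrt u$ \emph{is} smooth as a function of $\sqrt u$, which is precisely what lets one go one order beyond the leading behaviour already noted in the excerpt.

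First I would factor
$$\beta(u)\;=\;\frac{\sqrt u\bigl(\sqrt u+\sqrt{4-3u}\bigr)}{2}$$
and expand $\sqrt{4-3u}=2-\tfrac{3}{4}u+O(u^2)$ to obtain $\beta(u)=\sqrt u\bigl(1+\tfrac12\sqrt u+O(u)\bigr)$. Taking logarithms,
$$-\log\beta(u)\;=\;\tfrac12\log(1/u)\,-\,\tfrac12\sqrt u\,+\,O(u).$$
Then I would substitute $u=1-e^{-z}=z-\tfrac12 z^2+O(z^3)$. Elementary expansions of $\log$ and $\sqrt{\cdot}$ give $\tfrac12\log(1/u)=\log(1/\sqrt z)+O(z)$ and $\tfrac12\sqrt u=\tfrac12\sqrt z+O(z^{3/2})$, so combining,
$$g(z)\;=\;\log(1/\sqrt z)\,-\,\tfrac12\sqrt z\,+\,O(z).$$

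Both claimed inequalities then follow for sufficiently small $z$. The upper bound $-\tfrac12\sqrt z+O(z)\le z$ reduces to $|O(z)|\le z+\tfrac12\sqrt z$, which holds once $z$ is small; the lower bound $-\tfrac12\sqrt z+O(z)\ge-\sqrt z$ reduces to $\tfrac12\sqrt z\ge-O(z)$, which holds once $\sqrt z$ is smaller than a constant depending on the implied $O(z)$ constant. There is no real obstacle; the whole argument is routine power-series manipulation of the explicit formulae for $\beta$ and $g$, and in fact yields something strictly stronger than the stated two-sided bound.
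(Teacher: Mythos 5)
Your proof is correct, and it takes a slightly different (and in fact sharper) route than the paper's. The paper simply invokes the one-sided estimates $\sqrt u\le\beta(u)\le\sqrt u+u$ and $z-z^2\le 1-e^{-z}\le z$ and chains inequalities separately for the lower bound (and, by symmetry of the argument, the upper). You instead derive a genuine two-term asymptotic $g(z)=\log(1/\sqrt z)-\tfrac12\sqrt z+O(z)$ via the factorization $\beta(u)=\tfrac12\sqrt u\bigl(\sqrt u+\sqrt{4-3u}\bigr)$, and then read both bounds off that single expansion by noting that $\tfrac12\sqrt z$ dominates the $O(z)$ error for small $z$. What this buys: one computation instead of two inequality chains, plus the stronger identification of the second-order coefficient $-\tfrac12$, which the paper's stated bound deliberately coarsens to lie between $-1$ and $0$. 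What it costs: a little more bookkeeping with Taylor remainders, and one must make sure (as you did) to argue the $\sqrt z$ term beats the $O(z)$ term rather than, say, comparing $O(z)$ to $z$; your brief remark that the lower-bound inequality ``holds once $\sqrt z$ is smaller than a constant depending on the implied $O(z)$ constant'' is exactly the point, and the same reasoning is what rescues the upper-bound step. Both approaches are routine once the explicit formula for $\beta$ is in hand.
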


\begin{proof}
We use the estimates $z - z^2 \le 1 - e^{-z} \le z$, and $\sqrt{u} \le \beta(u) \le \sqrt{u} + u$, which are valid for small $z$ and $u$. It follows that
$$g(z) \; \ge \; -\log \beta(z) \; \ge \; -\log(\sqrt{z} + z) \; = \; -\log \sqrt{z} - \log(1 + \sqrt{z}) \; \ge \; -\log \sqrt{z} - \sqrt{z}.$$
The proof of the upper bound is similar.
\end{proof}

We can now easily deduce Lemma~\ref{seeds}.

\begin{proof}[Proof of Lemma~\ref{seeds}]
Let $m = |A \cap R|$. By Lemma~\ref{ex}, if $A$ internally spans $R$ then $m \ge (a + b)/2$. There are at most ${{ab} \choose m}$ ways to choose the set $A \cap R$, given $m$, and each occurs with probability at most $p^m$. Thus, by the union bound,
$$\Pr_p\big( [A \cap R] = R \big) \; \le \; \sum_{m \ge (a+b)/2} {{ab} \choose m} p^m \; \le \; (6aq)^{(a+b)/2}.$$
The final inequality follows since ${{ab} \choose m}p^m \le \big( \frac{eabp}{m} \big)^m \le (6aq)^m/2$ if $\delta > 0$ is sufficiently small, and since $6aq \le 12\delta < 1/2$. In the second inequality we used $p \sim q$ and $m \ge b/2$.

But $\log(1/\sqrt{aq}) \ge g(aq) - aq$, by Observation~\ref{gobs}, so
$$(aq)^{(a+b)/2} \; \le \; \exp\Big( - (a + b) g(aq) \,+\, (a+b)aq \Big).$$
The result now follows, since $aq \le 2\delta$, and $\sqrt{6}e^{2\delta} < 3$ if $\delta$ is sufficiently small.
\end{proof}

\section{Hierarchies}\label{hiersec}

In this section we shall recall some important definitions and lemmas from~\cite{GH2} and~\cite{Hol}; for the proofs, we refer the reader to those papers. In particular, we define a hierarchy as in Section~9 of~\cite{Hol}.

\begin{defn}
A \emph{hierarchy} $\HH$ for a rectangle $R \subset [n]^2$ is an oriented rooted tree $G_\HH$, with all edges oriented away from the root (`downwards'), together with a collection of rectangles $(R_u \subset [n]^2 \,:\, u \in V(G_\HH))$, one for each vertex of $G_\HH$,  satisfying the following criteria.
\begin{enumerate}
\item[$(a)$] The root of $G_\HH$ corresponds to $R$.
\item[$(b)$] Each vertex has at most $2$ neighbours below it.
\item[$(c)$] If $v \in N_{G_\HH}^\->(u)$ then $R_u \supset R_v$.
\item[$(d)$] If $N_{G_\HH}^\->(u) = \{v,w\}$ then $[ R_v \cup R_w ] = R_u$.
\end{enumerate}
\end{defn}

A vertex $u$ with $N_{G_\HH}^\->(u)=\emptyset$ is called a \emph{seed}. Given two rectangles $S \subset R$, we write $D(S,R)$ for the event (depending on the set $(A \cap R) \setminus S$) that
$$R \: = \: [(A \cup S) \cap R],$$
i.e., the event that $R$ is internally spanned by $A \cup S$.

An event $\A$ is increasing if $A \in \A$ and $A \subset A'$ implies that $A' \in \A$. Two increasing events $\B$ and $\C$ are said to \emph{occur disjointly} if there exist disjoint sets $B \subset A$ and $C \subset A$ with $B \in \B$ and $C \in \C$. We write $\B \circ \C$ for the collection of such sets $A$, i.e., the event that $\B$ and $\C$ occur disjointly. We say a hierarchy \emph{occurs} (or is \emph{satisfied} by the set $A$) if the following events all occur disjointly.
\begin{enumerate}
\item[$(e)$] For each seed $u$: $R_u$ is internally spanned by $A$.
\item[$(f)$] For each pair $(u,v)$ satisfying $N_{G_\HH}^\->(u) = \{v\}$: $D(R_v,R_u)$ occurs.
\end{enumerate}

Given two rectangles $S \subset R$, with dimensions $(a_1,a_2)$ and $(b_1,b_2)$ respectively, define
$$d_j(S,R) \; := \; \frac{b_j - a_j}{b_j}$$
for $j = 1,2$, and let $d(S,R) = \max\{d_1(S,R),d_2(S,R)\}$.

The following definition is slightly different to that in~\cite{Hol}, and is motivated by the method of~\cite{GH2} (see also Lemma~\ref{crossing} below). This definition is necessary because in order to prove a sharper result, we need to take a finer hierarchy. In our application we shall take $T = \sqrt{q}$ and $Z =\log^3(1/q)/\sqrt{q}$.

\begin{defn}
A hierarchy is \emph{good} for $(T,Z) \in \RR_+^2$ if is satisfies the following.
\begin{enumerate}
\item[$(g)$] If $N_{G_\HH}^\->(u) = \{v\}$ and $|N_{G_\HH}^\->(v)| = 1$ then $T \le d(R_v,R_u) \le 2T$.
\item[$(h)$] If $N_{G_\HH}^\->(u) = \{v\}$ and $|N_{G_\HH}^\->(v)| \neq 1$ then $d(R_v,R_u) \le 2T$.
\item[$(i)$] If $|N_{G_\HH}^\->(u)| = 2$ and $v \in N_{G_\HH}^\->(u)$, then $d(R_v,R_u) \ge T$.
\item[$(j)$] $u$ is a seed if, and only if, $\sht(R_u) \le Z$.
\end{enumerate}
\end{defn}

Before continuing, we make a simple observation about the height, $h(\HH)$ of a hierarchy $\HH$, by which we mean the maximum distance in $G_\HH$ of a leaf from the root.

\begin{lemma}\label{height}
Let $R$ be a rectangle, let $Z > 1 > T > 0$, and let $\HH$ be a hierarchy for $R$ which is good for $(T,Z)$. Then
$$h(\HH) \; \le \; \frac{8}{T} \log\left( \frac{\phi(R)}{Z} \right) \,+\, 1.$$
\end{lemma}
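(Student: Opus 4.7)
The plan is to bound the length of any downward path $u_0 \to u_1 \to \cdots \to u_k$ from the root of $G_\HH$ to a leaf. The strategy is to show that most edges along the path force a multiplicative shrinkage by a factor of at least $(1-T)$ in one coordinate direction, and then telescope.

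I would classify each edge $e_i = (u_{i-1}, u_i)$ as \emph{good} when conditions (g) or (i) force $d(R_{u_i}, R_{u_{i-1}}) \ge T$ --- that is, when $u_{i-1}$ has two children, or when both $u_{i-1}$ and $u_i$ have exactly one child --- and as \emph{bad} otherwise (so $u_{i-1}$ has one child while $u_i$ has zero or two children). The key structural observation is that two consecutive edges cannot both be bad: this would require $u_i$ to have zero or two children (from $e_i$ bad) and simultaneously exactly one child (from $e_{i+1}$ bad). Consequently at least $\lfloor k/2 \rfloor$ of the edges on the path are good.

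For each good edge pick a coordinate direction $j = j(e_i) \in \{1,2\}$ witnessing $\dim_j(R_{u_i}) \le (1-T)\dim_j(R_{u_{i-1}})$, and let $m_j$ count the good edges assigned direction $j$. Every non-seed vertex $u$ satisfies $\dim_1(R_u), \dim_2(R_u) \ge \sht(R_u) > Z$ by condition (j), and every edge has a non-seed parent (seeds have no out-neighbours). Telescoping the factor $(1-T)$ along the direction-$j$ good edges --- discarding at most the final one, whose endpoint may be the terminal seed $u_k$ --- gives $\dim_j(R)(1-T)^{m_j - 1} > Z$; since $\dim_j(R) \le \phi(R)$ and $-\log(1-T) \ge T$, we obtain $m_j \le T^{-1} \log(\phi(R)/Z) + 1$. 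Adding the two bounds and using $m_1 + m_2 \ge \lfloor k/2 \rfloor$ comfortably yields the stated inequality.

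The only slightly delicate ingredient is the non-consecutiveness of bad edges, which prevents long stretches in which condition (h) supplies no useful lower bound on $d$. Beyond that, the proof reduces to a telescoping product combined with the uniform lower bound $\sht(R_u) > Z$ on all internal rectangles.
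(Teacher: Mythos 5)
Your proof is correct and takes essentially the same approach as the paper: the paper's observation that in every two consecutive backward steps along the path at least one dimension increases by a factor of at least $1+T$ is precisely your claim that two consecutive edges cannot both be bad, and the subsequent telescoping and pigeonhole over the two coordinate directions (your $m_1,m_2$) is the same computation in contrapositive form. Both treatments are slightly loose about the additive constant in the final bound, but this is harmless, since in the application $\phi(R)/Z\to\infty$.
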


\begin{proof}
Consider a path $P$ of length $h(\HH)$ from the root to a leaf $u$. Let $w$ be the parent (i.e., the neighbour) of $u$ in $G_\HH$, and note that $\sht(R_w) > Z$. Note also that in every two steps backwards along $P$, at least one of the dimensions of the corresponding rectangle increases by a factor of at least $1 + T$. Hence one of the dimensions goes up by this factor at least $(h(\HH) - 1)/4$ times (on the path from $w$ to the root), and so
$$Z(1 + T)^{(h(\HH)-1)/4} \; \le \; \phi(R).$$
The result follows by rearranging and using the inequality $\log(1 + T) \ge T / 2$, which is valid for all $T \in (0,1)$.
\end{proof}

The following key lemma about hierarchies was proved in \cite{Hol}. Although our definition of hierarchy is slightly different, the proof in our case is almost identical.

\begin{lemma}[Proposition~32 of~\cite{Hol}]\label{exists}
Let $Z  > 1 > T > 0$, let $R$ be a rectangle, and suppose $A$ internally spans $R$. Then there exists a hierarchy $\HH$ for $R$, which is good for $(T,Z)$, and which is satisfied by $A$.
\end{lemma}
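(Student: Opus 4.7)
The plan is to construct $\HH$ recursively from the root downwards, maintaining at each vertex $u$ of the growing tree an associated rectangle $R_u$ and a witness subset $A_u \subseteq A$ that internally spans $R_u$; witness subsets on distinct branches will be kept disjoint, which automatically delivers the disjoint-occurrence required for events (e) and (f). The root is assigned $R$ together with any subset of $A$ that internally spans it (which exists by hypothesis), giving (a).

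At each internal vertex $u$ the case analysis is as follows. If $\sht(R_u) \le Z$, declare $u$ a seed, immediately satisfying (j) and event (e) via the witness $A_u$. Otherwise, apply the classical Aizenman--Lebowitz disjoint internal-spanning decomposition to $R_u$ with respect to $A_u$, obtaining disjoint $A_u^1, A_u^2 \subseteq A_u$ together with disjoint rectangles $S_j := [A_u^j] \subseteq R_u$ satisfying $[S_1 \cup S_2] = R_u$. If both $d(S_j, R_u) \ge T$, declare $u$ to split: its two children $v_1, v_2$ receive $R_{v_j} = S_j$ and $A_{v_j} = A_u^j$, yielding (d), (i), and the disjoint occurrence required in (f). Otherwise, without loss of generality $d(S_1, R_u) < T$, so $S_1$ is nearly as large as $R_u$; we iterate the decomposition inside $S_1$, producing a strictly decreasing sequence of internally spanned rectangles $R_u = T_0 \supset T_1 \supset T_2 \supset \cdots$ (with $T_{i+1}$ the ``large side'' of $T_i$'s decomposition), and stop at the first $T_k$ with $d(T_k, R_u) \ge T$; then $v$ is attached as the unique child of $u$ with $R_v := T_k$ and $A_v$ the associated witness subset.

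In the shrink case the event $D(R_v, R_u)$ required for (f) holds because the accumulated witnesses $A_u \setminus A_v$ lie outside $R_v$ and, together with the fully infected $R_v$, inductively span $T_{k-1}, T_{k-2}, \ldots, T_0 = R_u$, by the very construction of the iteration. The main technical obstacle is establishing the upper bound $d(R_v, R_u) \le 2T$ at the stopping point (needed for (g) and (h)): the preceding rectangle $T_{k-1}$ satisfies $d(T_{k-1}, R_u) < T$, so both its side-lengths are within a factor $T$ of those of $R_u$, and the required geometric claim is that the ``large side'' produced in a single decomposition step cannot shrink either dimension by more than a further factor $T$. This elementary but fiddly computation is exactly that carried out in Proposition~32 of \cite{Hol}; our hierarchy definition differs only in that we now take $T$ on the scale $\sqrt{q}$ rather than a constant, which does not affect the combinatorial argument. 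Conditions (b) and (c) are immediate from the construction, and Lemma~\ref{height} guarantees that the recursion terminates.
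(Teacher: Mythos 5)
Your overall strategy — recursively descend, iterate the spanning decomposition down the piece that stays closest in size to the current rectangle, and switch to a split vertex when both pieces shrink substantially — is essentially the paper's. However, there is a genuine gap at the step that is supposed to deliver the upper bound $d(R_v,R_u)\le 2T$ required by conditions (g) and (h). You appeal to a ``geometric claim'' that the large piece produced by a single application of the decomposition lemma cannot shrink either dimension by more than a further factor $T$, i.e.\ that $d(T_k,T_{k-1})\le T$. This is simply false: Proposition~30 of~\cite{Hol} only guarantees disjointly witnessed rectangles $T_k,Q_{k-1}$ with $[T_k\cup Q_{k-1}]=T_{k-1}$ and strictly smaller semi-perimeters, and both may be far from $T_{k-1}$ in the $d$-metric. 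For example, a $2m\times 2m$ square can be internally spanned by two $m\times m$ blocks in opposite corners, giving two pieces with $d(\cdot,T_{k-1})=1/2 \gg 2T$ when $T=\sqrt q$. Consequently, a single iteration step can jump from $d(T_{k-1},R_u)<T$ straight to $d(T_k,R_u)>2T$, and then $T_k$ cannot be the unique child of $u$ in a good hierarchy.

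The paper handles precisely this overshoot (its ``Case 3'', $d(S,R)>2T$) by \emph{backing up} one step: the previous rectangle $U=T_{k-1}$ is inserted as the unique child of $R_u$ — legitimate because $d(U,R_u)<T\le 2T$, so (h) holds — and $U$ is made a \emph{split} vertex with the two children $S=T_k$ and $S'=Q_{k-1}$. Since $S$ was chosen to minimize $d(\cdot,R_u)$, the discarded piece satisfies $d(S',R_u)\ge d(S,R_u)>2T$; combined with $d(U,R_u)\le T$ this yields $d(S,U)\ge d(S,R_u)-d(U,R_u)>T$ and likewise $d(S',U)>T$, so (i) holds, and induction is applied to both $S$ and $S'$. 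Your construction needs this back-up mechanism; without it the ``shrink'' branch does not produce a good hierarchy when the decomposition overshoots $2T$. Two smaller remarks: Proposition~30 produces rectangles with \emph{disjoint witness sets}, not disjoint rectangles as you assert (this does not break the disjoint-occurrence bookkeeping, but the wording should be fixed); and invoking Lemma~\ref{height} for termination is mildly circular, since that lemma presupposes a good hierarchy — termination follows instead from the strict decrease of $\phi$ under the decomposition.
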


\begin{proof}[Sketch of proof]
We use induction on $\phi(R)$; if $\phi(R) \le Z$ then the result is trivial. Now, assume $\phi(R) > Z$ and apply Proposition~30 of~\cite{Hol} (see also~\cite{Jozsi} or~\cite{BB}) repeatedly, each time choosing the rectangle $S$ which minimizes $d(S,R)$. We stop when we obtain a rectangle $S$ such that either $\phi(S) \le Z$, or $d(S,R) \ge T$.

There are three cases. If $\phi(S) \le Z$ and $d(S,R) \le 2T$, then $\HH$ has two vertices. If $\phi(S) > Z$ and $d(S,R) \le 2T$, the root of $\HH$ has degree one, and the rest of $\HH$ can be found by applying the induction hypothesis to $S$.

So assume that $d(S,R) > 2T$, and consider the last application of Proposition~30 of~\cite{Hol}. We deduce that there exist rectangles $S'$ and $U$ with $[S \cup S'] = U$, with $d(U,R) \le T$, $\phi(U) > Z$ and $d(S',R) > 2T$, and such that $S$ and $S'$ are disjointly internally spanned by $A$. But $d(S,U) \ge d(S,R) - d(U,R) \ge T$, and similarly for $S'$. Thus, applying the induction hypothesis to $S$ and $S'$, we obtain a hierarchy $\HH$ as required.
\end{proof}

Finally, recall the following fundamental lemma of van den Berg and Kesten~\cite{vBK}.

\begin{vBKlemma}
Let $\A$ and $\B$ be any two increasing events, and let $p \in (0,1)$. Then
$$\Pr_p(\A \circ \B) \; \le \; \Pr_p(\A)\,\Pr_p(\B).$$
\end{vBKlemma}

We can now easily deduce, as in Section~10 of~\cite{Hol}, our basic bound on the probability of percolation. Given a rectangle $R$ and a pair $(T,Z) \in \RR^2$, we write $\HH(R,T,Z)$ for the collection of hierarchies for $R$ which are good for $(T,Z)$.

Recall that $\Pr_p\big( I(R) \big)$ and $\Pr_p\big( D(S,R) \big)$ denote the probabilities in $\Pr_p$ of the events ``$R$ is internally spanned by $A$" and ``$R$ is internally spanned by $A \cup S$" respectively.

\begin{lemma}\label{basic}
Let $R$ be a rectangle in $[n]^2$, let $Z > 1 > T > 0$, and let $p > 0$. Then
$$\Pr_p\Big([A \cap R] = R \Big) \; \le \; \sum_{\HH \in \HH(R,T,Z)} \Bigg( \prod_{N_{G_\HH}^\->(u) = \{v\}} \Pr_p\Big( D(R_v,R_u) \Big) \Bigg) \left(\prod_{\textup{ seeds }u} \Pr_p\Big( I(R_u) \Big) \right).$$
\end{lemma}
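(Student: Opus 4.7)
The plan is to combine the existence lemma (Lemma~\ref{exists}) with an iterated application of the van den Berg--Kesten inequality, following essentially the same route as in Section~10 of~\cite{Hol}. First I would observe that, by Lemma~\ref{exists}, whenever $A$ internally spans $R$ there is at least one hierarchy $\HH \in \HH(R,T,Z)$ which is good for $(T,Z)$ and satisfied by $A$; consequently
\[ \{A : [A \cap R] = R\} \;\subseteq\; \bigcup_{\HH \in \HH(R,T,Z)} \{A : \HH \text{ is satisfied by } A\}. \]
The union bound then reduces the lemma to showing, for each fixed good hierarchy $\HH$, that $\Pr_p(\HH \text{ is satisfied})$ is at most the corresponding summand on the right-hand side.

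Next I would unwind the definition of ``satisfied''. The deterministic conditions $(a)$--$(d)$ and $(g)$--$(j)$ are properties of $\HH$ itself and place no constraint on $A$; only the random conditions $(e)$ and $(f)$ enter, and they require precisely that the family of events
\[ \{I(R_u) : u \text{ a seed}\} \;\cup\; \{D(R_v,R_u) : N_{G_\HH}^\->(u) = \{v\}\} \]
all occur disjointly. Every event in this family is increasing in $A$: for $I(R_u)$ this is immediate, and for $D(R_v,R_u)$ it follows because if $R_u \subseteq [(A \cup R_v) \cap R_u]$ and $A \subseteq A'$, then also $R_u \subseteq [(A' \cup R_v) \cap R_u]$.

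Finally, I would iterate the van den Berg--Kesten lemma. For any finite list $\A_1, \ldots, \A_k$ of increasing events, the event $\A_1 \circ \cdots \circ \A_{k-1}$ is itself increasing (enlarging $A$ preserves disjoint witnesses), so pairing it with $\A_k$ and invoking vBK gives $\Pr_p(\A_1 \circ \cdots \circ \A_k) \le \prod_i \Pr_p(\A_i)$ by induction on $k$. Applying this to the family displayed above yields
\[ \Pr_p(\HH \text{ satisfied}) \;\le\; \Bigg(\prod_{N_{G_\HH}^\->(u)=\{v\}} \Pr_p\bigl( D(R_v,R_u)\bigr)\Bigg) \Bigg(\prod_{\text{seeds } u} \Pr_p\bigl(I(R_u)\bigr)\Bigg), \]
and summing over $\HH \in \HH(R,T,Z)$ completes the proof. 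There is no real obstacle beyond bookkeeping: the one point deserving a line of verification is that each $D(R_v,R_u)$ is increasing, so that vBK applies; the remainder is a formal combination of Lemma~\ref{exists} with iterated vBK.
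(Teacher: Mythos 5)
Your proposal is correct and follows essentially the same route as the paper: Lemma~\ref{exists} to guarantee a satisfied good hierarchy, a union bound (equivalently, bounding the probability by the expected number of satisfied good hierarchies), and then iterated van den Berg--Kesten applied to the increasing events from conditions $(e)$ and $(f)$. The only difference is that you spell out the routine verifications (that $D(R_v,R_u)$ is increasing and that vBK iterates) which the paper leaves implicit.
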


(Above and in subsequent usage, the first product is over all pairs of vertices $(u,v)$ of $\HH$ that satisfy the given condition $N_{G_\HH}^\->(u) = \{v\}$, and the second product is over all seeds $u$ of $\HH$.)

\begin{proof}[Proof of Lemma \ref{basic}]
By Lemma~\ref{exists}, if $A$ internally spans $R$ then there exists a hierarchy in $\HH(R,T,Z)$ which is satisfied by $A$. Hence the probability that $A$ internally spans $R$ is bounded above by the expected number of such hierarchies. Since  the events $I(R_u)$ and $D(R_v,R_u)$ are all monotone, and all occur disjointly (see $(e)$ and $(f)$ above), the result follows by the van den Berg-Kesten Lemma.
\end{proof}

We recall the following lemma of Aizenman and Lebowitz~\cite{AL}, which is a standard tool for proving lower bounds on $p_c([n]^d,2)$.

\begin{lemma}\label{k2k}
Suppose $A$ internally spans $[n]^2$. Then, for all $1 \le L \le n$, there exists a rectangle $R$, internally spanned by $A$, with
$$L \; \le \; \lg (R) \; \le \; 2L.$$
\end{lemma}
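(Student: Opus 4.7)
The plan is to use the standard rectangles-process decomposition underlying Proposition~30 of~\cite{Hol} (already invoked in the sketch of Lemma~\ref{exists}): any rectangle $R$ internally spanned by $A$ is either a single site of $A$, or can be written as $R = [R_1 \cup R_2]$, where $R_1, R_2$ are disjoint internally spanned rectangles of strictly smaller semi-perimeter. Applying this recursively starting from $[n]^2$, I would build a rooted binary tree $\mathcal{T}$ whose root corresponds to $[n]^2$, whose leaves are singletons in $A$, and each of whose internal nodes $R$ has two children $R_1, R_2$ with $R = [R_1 \cup R_2]$.

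Next I would descend along a distinguished path from the root of $\mathcal{T}$ to a leaf, at each internal node choosing the child with larger $\lg$. The key quantitative observation is that for any internal node $R$ with children $R_1, R_2$, we have
$$\lg(R) \; \le \; \lg(R_1) + \lg(R_2) \; \le \; 2 \max\{\lg(R_1), \lg(R_2)\},$$
since $R \subset [R_1 \cup R_2]$ is contained in the bounding rectangle of $R_1 \cup R_2$. Hence, along the chosen path, $\lg$ decreases by at most a factor of $2$ between consecutive nodes.

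Now I would track $\lg$ along this path: it starts at $\lg([n]^2) = n \ge L$ and ends at $1 \le L$. Consider the last rectangle $R$ on the path with $\lg(R) \ge L$. Its (unique) child $R'$ on the path satisfies $\lg(R') < L$, and by the factor-$2$ bound $\lg(R) \le 2\lg(R') < 2L$. Every rectangle on the path is, by construction, internally spanned by $A$, so $R$ is the required rectangle.

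There is no real obstacle here: the only non-trivial input is the disjoint decomposition of internally spanned rectangles, which is a standard tool (cf.~Proposition~30 of~\cite{Hol} and~\cite{Jozsi,BB}), and everything else is an elementary tree-descent argument exploiting $\lg(R) \le \lg(R_1) + \lg(R_2)$.
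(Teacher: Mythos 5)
The paper does not prove Lemma~\ref{k2k}; it simply cites Aizenman--Lebowitz~\cite{AL}. Your argument is precisely the standard proof of that lemma, built on the disjoint decomposition of internally spanned rectangles and a tree descent, so the overall strategy is correct.

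However, the quantitative claim you attribute to the bounding box is not quite right. You assert that if $[R_1\cup R_2]=R$ then $\lg(R)\le \lg(R_1)+\lg(R_2)$, ``since $R$ is contained in the bounding rectangle of $R_1\cup R_2$.'' Containment in the bounding box does not give this: two merging rectangles may be separated by a gap of one empty row or column, which the closure fills in, and this gap contributes to the long side of $R$. For example, if $R_1=\{(1,1)\}$ and $R_2=\{(3,1)\}$ then $[R_1\cup R_2]=[(1,1),(3,1)]$, so $\lg(R)=3$ while $\lg(R_1)+\lg(R_2)=2$. The correct bound for the two-neighbour process is $\lg(R)\le \lg(R_1)+\lg(R_2)+1$ (the gap has width at most one, and only in one coordinate direction, since a positive gap in both directions prevents merging). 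This does not break your proof: descending to the larger child gives $\lg(R)\le 2\max\{\lg(R_1),\lg(R_2)\}+1$, and at the last node with $\lg(R)\ge L$ the child $R'$ has $\lg(R')\le L-1$, so $\lg(R)\le 2(L-1)+1=2L-1\le 2L$. So the conclusion survives, but the intermediate inequality should be corrected.

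One further cosmetic point: for $L=1$ the ``last node with $\lg\ge L$'' is simply a leaf, i.e.\ a singleton of $A$, which has $\lg=1\in[1,2]$; this degenerate case is worth noting since there is no child with $\lg<L$ to compare against.
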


We recall also the following bound on $\Pr_p\big( D(R,R') \big)$ from~\cite{GH2}.

\begin{lemma}[Lemma~5 of~\cite{GH2}] \label{crossing}
Let $R \subset R'$ be rectangles of dimensions $(a, b)$ and $(a+s,b+t)$ respectively, and let $p > 0$. Then
$$\Pr_p\big( D(R,R') \big) \; \le \;  \exp \Big( - sg(bq) - tg(aq) + 2\big( g(bq) + g(aq) \big)+ (qst)e^{2g(bq) + 2g(aq)} \Big).$$
\end{lemma}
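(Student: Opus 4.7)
The approach is to reduce the event $D(R,R')$ to a conjunction of one-dimensional crossing events on the strips of $R'\setminus R$, each of which can then be bounded by the crossing estimate from Lemma~8 of~\cite{Hol} quoted in the introduction. Parameterize the position of $R$ inside $R'$ by nonnegative integers $s_L,s_R,t_B,t_T$ with $s_L+s_R=s$ and $t_B+t_T=t$, corresponding to the amounts by which $R'$ extends beyond $R$ on its four sides. First I would observe that if $A\cup R$ internally spans $R'$, then because $R$ is a filled rectangle, the bootstrap process fills $R'$ by successively adjoining rows and columns to its current rectangular filled region, and in particular each of the four strips of $R'\setminus R$ adjacent to $R$ (of dimensions $s_L\times b$, $s_R\times b$, $a\times t_B$, $a\times t_T$) must be crossed from the $R$-side to the opposite side of $R'$ by $A$ restricted to the strip.

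These four crossings lie in disjoint subrectangles of $R'\setminus R$, so they occur disjointly and the van den Berg--Kesten Lemma applies. Applying the crossing bound of Lemma~8 of~\cite{Hol} to each strip and multiplying, for a fixed position of $R$ inside $R'$ one obtains a bound of
$$\exp\bigl(-s_L g(bq) - s_R g(bq) - t_B g(aq) - t_T g(aq)\bigr) \;=\; \exp\bigl(-sg(bq)-tg(aq)\bigr).$$
Summing over the at most $(s+1)(t+1)$ positions of $R$ inside $R'$ contributes a multiplicative factor of $(s+1)(t+1)$, and using Observation~\ref{gobs} to rewrite $\log(s+1)+\log(t+1)$ in terms of $g$ absorbs this factor into the correction $\exp\bigl(2(g(bq)+g(aq))\bigr)$ appearing in the stated bound.

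The main obstacle is handling the four corner rectangles of $R'\setminus R$ (of sizes $s_L\times t_B$, etc.), which are not covered by the four strips above but whose cells must also be filled. I would address these by noting that an initially infected cell in a corner can effectively assist the crossings of the two adjacent strips, amplifying the per-strip crossing bound by a factor that can be controlled using Lemma~8 of~\cite{Hol} applied to a strip enlarged by one cell. The total expected number of initially infected corner cells is at most $qst$, and each such infection amplifies the crossing bound by at most $e^{2g(bq)+2g(aq)}$; an inclusion--exclusion (or equivalently, a Poisson-like summation over subsets of corner infections) then yields a correction of the form $\exp\bigl((qst)e^{2g(bq)+2g(aq)}\bigr)$, which matches the last term of the lemma's bound. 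Combining the three contributions produces the stated inequality.
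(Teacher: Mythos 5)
The central step of your argument --- that $D(R,R')$ forces each of the four strips of $R'\setminus R$ to be crossed by $A$ \emph{restricted to that strip} --- is false, and everything downstream inherits this gap. Take $R=\{(0,0)\}$ and $R'=\{0,1\}^2$ (so $a=b=s=t=1$) and let $A=\{(1,1)\}$, a single corner cell. Then $(0,1)$ and $(1,0)$ each have two infected neighbours (one in $R$, one in the corner), so $[(A\cup R)\cap R']=R'$ and $D(R,R')$ holds; yet the right strip $\{(1,0)\}$ and the top strip $\{(0,1)\}$ contain no point of $A$, so neither is crossed in your sense. In general, growth from $R$ to $R'$ can alternate between horizontal and vertical steps and borrow help from cells outside a given strip, so the four strip-crossings are not a necessary condition. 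Consequently the ``baseline'' factor $\exp(-sg(bq)-tg(aq))$ is not established by BK on four strips, and the corner analysis cannot then be bolted on as a multiplicative correction to a bound that was never proved. Your treatment of the corners is also only a sketch: the assertion that each infected corner cell ``amplifies'' the crossing estimate by exactly $e^{2g(bq)+2g(aq)}$ is not argued, and the inclusion--exclusion step that is supposed to turn the expected number $qst$ of such cells into the factor $\exp\bigl((qst)e^{2g(bq)+2g(aq)}\bigr)$ is not carried out.

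There are two further problems. First, $R\subset R'$ is a \emph{given} pair of rectangles, so $s_L,s_R,t_B,t_T$ are determined by $R$ and $R'$; there is no sum over positions, and the factor $(s+1)(t+1)$ you introduce is spurious. Second, even granting such a factor, it cannot be ``absorbed'' into $e^{2(g(bq)+g(aq))}$ via Observation~\ref{gobs}: by that observation $g(aq)$ is roughly $\tfrac12\log(1/(aq))$, a quantity of order $\log(1/q)$, whereas $\log(s+1)+\log(t+1)$ is unrelated (it is $0$ when $s=t=0$ and can be much smaller or larger otherwise), so this step does not explain the $+2\bigl(g(bq)+g(aq)\bigr)$ term. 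The proof in~\cite{GH2} does not proceed by four disjoint strip-crossings with a corner afterthought; it works with horizontal and vertical traversability events on \emph{overlapping} regions and controls the overlap by a conditioning argument, which is where both the $2\bigl(g(bq)+g(aq)\bigr)$ boundary term and the $(qst)e^{2g(bq)+2g(aq)}$ term genuinely arise. As written, your proposal is not a proof of the lemma.
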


The following observation follows exactly as in Lemma~10 of~\cite{GH2}.

\begin{obs}[Lemma~10 of~\cite{GH2}]\label{e^g}
Let $B > 0$ be sufficiently large, and let $a \in \N$ and $q > 0$ satisfy $a \le B/q$. Then
$$e^{2g(aq)} \; \le \; \ds\frac{4B}{aq}.$$
\end{obs}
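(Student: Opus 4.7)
The plan is to unwind the definition of $g$ and bound $e^{2g(z)}$ directly, where $z := aq$. Writing $e^{g(z)} = 1/\beta(1-e^{-z})$, the claim reduces to showing
$$\beta(1-e^{-z})^{2} \; \ge \; \frac{z}{4B}$$
for $0 < z \le B$, with $B$ sufficiently large.

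First, I would establish the elementary inequality $\beta(u) \ge \sqrt{u}$ for all $u \in [0,1]$. From the definition $\beta(u) = (u + \sqrt{u(4-3u)})/2$, this is equivalent to $\sqrt{u(4-3u)} \ge 2\sqrt{u} - u = \sqrt{u}\,(2 - \sqrt{u})$. For $u \in [0,1]$ both sides are non-negative, so squaring reduces the claim to $4 - 3u \ge 4 - 4\sqrt{u} + u$, i.e., $\sqrt{u} \ge u$, which holds on $[0,1]$. Applying this with $u = 1 - e^{-z}$ gives
$$\beta(1-e^{-z})^{2} \; \ge \; 1 - e^{-z}.$$

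Next I would show that $1 - e^{-z} \ge z/(4B)$ whenever $0 < z \le B$, provided $B \ge 1$. Split into two regimes. For $z \in (0,1]$, the convexity bound $1 - e^{-z} \ge z - z^{2}/2 \ge z/2$ gives $1 - e^{-z} \ge z/2 \ge z/(4B)$ since $B \ge 1/2$. For $z \in [1,B]$, we use $1 - e^{-z} \ge 1 - e^{-1} \ge 1/2$ together with $z/(4B) \le 1/4$, which again gives the inequality with slack to spare.

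Combining the two steps yields $e^{2g(z)} = \beta(1-e^{-z})^{-2} \le (1-e^{-z})^{-1} \le 4B/z$, which is the desired bound. The argument involves only elementary monotonicity and convexity estimates, so there is no real obstacle; the only mild subtlety is splitting at $z = 1$ so that the simple linear lower bound $1-e^{-z} \ge z/2$ (small $z$) and the constant lower bound $1-e^{-z} \ge 1/2$ (large $z$) together cover the full range $(0,B]$, with the factor $4$ absorbing the worst case in each regime.
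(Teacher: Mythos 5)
Your proof is correct and takes essentially the same route as the paper: both lower-bound $\beta(u)=e^{-g(z)}$ by a constant multiple of $\sqrt{z/B}$, which is equivalent to the stated bound. The paper's own argument is terser (it appeals to the asymptotics $\beta(u)\sim\sqrt{z}$ as $z\to 0$ and $\beta(u)\to 1$ as $z\to\infty$ and then asserts $\beta(u)\ge\tfrac12\sqrt{z/B}$), whereas you make the same idea fully explicit by proving the exact inequality $\beta(u)\ge\sqrt{u}$ on $[0,1]$ and then bounding $1-e^{-z}\ge z/(4B)$ on $(0,B]$; the two are the same at heart, with yours being the more self-contained write-up.
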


\begin{proof}
Let $z > 0$ and $u = 1 - e^{-z}$, and recall that $e^{-g(z)} = \beta(u)$. Recall also that $\beta(u) \sim \sqrt{z}$ when $z \to 0$ and that $\beta(u) \to 1$ as $z \to \infty$. Thus, since $B > 0$ is sufficiently large, it follows that
$$\beta(u) \; \ge \; \frac{1}{2} \sqrt{\frac{z}{B}}$$
for every $z \le B$, as required.
\end{proof}

We shall need a couple more definitions in order to rewrite Lemmas~\ref{basic} and~\ref{crossing} in a more useful form. Given $\a,\b \in \RR_+^2$ with $\a \le \b$, let
$$W_g(\mathbf{a},\mathbf{b}) \; := \; \ds\inf_{\gamma \,:\, \mathbf{a} \to \mathbf{b}} \int_\gamma \Big( g(y) \,dx \:+\: g(x) \,dy \Big),$$
where the infimum is taken over all piecewise linear, increasing paths from $\mathbf{a}$ to $\mathbf{b}$ in $\RR^2$ (see Section 6 of \cite{Hol}). Now, for any two rectangles $R \subset R'$, and given $p > 0$, define
$$U(R,R') \; = \; W_g\big( q \dim(R), q \dim(R') \big).$$
The following observation is immediate from the definition.

\begin{obs}[Proposition~13 of~\cite{Hol}] \label{triv}
Let $R \subset R'$ be rectangles of dimensions $(a, b)$ and $(a+s,b+t)$ respectively, and let $p > 0$. Then
$$sg(bq) + tg(aq) \; \ge \; \frac{1}{q} U(R,R').$$
\end{obs}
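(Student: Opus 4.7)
The plan is to obtain the inequality by exhibiting one particular increasing path from $q\dim(R)$ to $q\dim(R')$ whose weighted length matches $q \cdot (s g(bq) + t g(aq))$, and then appealing to the fact that $W_g$ is defined as an infimum over all such paths.

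Specifically, I would take $\gamma$ to be the L-shaped path from $\mathbf{a} = (qa, qb)$ to $\mathbf{b} = (q(a+s), q(b+t))$ that goes vertically first and horizontally second: namely the segment from $(qa, qb)$ to $(qa, q(b+t))$ followed by the segment from $(qa, q(b+t))$ to $(q(a+s), q(b+t))$. On the vertical segment, $dx = 0$ and $x$ is held fixed at $qa$, so the integrand $g(y)\,dx + g(x)\,dy$ contributes exactly
\[
\int_{qb}^{q(b+t)} g(qa)\,dy \; = \; qt \cdot g(aq).
\]
On the horizontal segment, $dy = 0$ and $y$ is held fixed at $q(b+t)$, giving contribution
\[
\int_{qa}^{q(a+s)} g(q(b+t))\,dx \; = \; qs \cdot g(q(b+t)).
\]

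Since $g$ is decreasing on $(0, \infty)$ (as noted just after the definitions of $\beta$ and $g$) and $q(b+t) \ge qb$, we have $g(q(b+t)) \le g(bq)$. Adding the two segment contributions and using this monotonicity,
\[
\int_\gamma \big( g(y)\,dx + g(x)\,dy \big) \; \le \; q\big( tg(aq) + sg(bq) \big).
\]
Taking the infimum over piecewise linear increasing paths from $\mathbf{a}$ to $\mathbf{b}$ then yields $W_g(\mathbf{a}, \mathbf{b}) \le q(sg(bq) + tg(aq))$, i.e.\ $U(R, R') \le q(sg(bq) + tg(aq))$, which is the desired inequality.

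There is essentially no obstacle here: the only non-trivial ingredient is choosing the orientation of the L-path so that monotonicity of $g$ is used in the favorable direction (the symmetric L-path going right first and then up would force us to bound $g(q(a+s))$ from above by $g(aq)$, which works equally well and in fact gives the same estimate). This is why the result is labeled an observation and advertised as immediate from the definition.
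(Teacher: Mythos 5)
Your proof is correct and is exactly the argument the paper has in mind when it declares the observation ``immediate from the definition'': exhibit one increasing L-shaped path from $q\dim(R)$ to $q\dim(R')$, compute its weighted length, and use monotonicity of $g$ to compare against $q(sg(bq)+tg(aq))$, so that the infimum defining $W_g$ is bounded above as required. The only phrasing quibble is your closing line ``taking the infimum then yields''---you are not taking an infimum, you are simply using that the infimum is at most the value along your chosen path---but this does not affect the argument.
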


Let $N(\HH)$ denote the number of vertices in a hierarchy $\HH$, and $M(\HH)$ denote the number of vertices of $\HH$ which have outdegree two. The following technical lemma was proved in \cite{Hol}. Again, although our definition is slightly different, the proof is identical.

\begin{lemma}[Lemma~37 of~\cite{Hol}]\label{Holpod}
Let $T,Z \in \RR_+$, let $\HH$ be a hierarchy for the rectangle $R$ which is good for the pair $(T,Z)$, and let $p > 0$. There exists a rectangle $S \subset R$, called the `pod' of $\HH$,  such that
$$\dim(S) \; \le \; \sum_{\textup{seeds }u} \dim(R_u)$$
and $$\sum_{N_{G_\HH}^\->(v) = \{w\}} U(R_w,R_v) \; \ge \; U(S,R) \,-\, 2q M(\HH) g(Zq).$$
\end{lemma}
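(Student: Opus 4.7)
The plan is to induct on $|V(G_\HH)|$, following Lemma~37 of \cite{Hol}. For the base case, $\HH$ is a single vertex $r$, which is necessarily a seed; take $S = R_r = R$. Then the left-hand sum of the cost inequality is empty, $U(S,R) = 0$, and $M(\HH) = 0$, so both conclusions hold trivially.

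For the inductive step with a single-child root $r$ whose unique child is $u$, let $\HH'$ be the sub-hierarchy at $u$ with pod $S'$ given by induction, and set $S = S'$. The dimension bound is inherited verbatim. For the cost bound, add $U(R_u,R_r)$ to both sides of the inductive inequality for $\HH'$ and use that $W_g$ is subadditive under concatenation of increasing paths, yielding $U(R_u,R_r) + U(S,R_u) \ge U(S,R_r)$. Since $M(\HH) = M(\HH')$, the error term is unchanged and the bound follows.

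In the branching case, suppose the root $r$ has children $u_1,u_2$ with sub-hierarchies $\HH_1,\HH_2$ and inductive pods $S_1,S_2$. I would construct $S\subset R_r$ so that $\dim(S)$ equals the componentwise minimum of $\dim(R_r)$ and $\dim(S_1)+\dim(S_2)$, by placing shifted copies of the $S_i$ inside $R_r$ and truncating a coordinate if the combined seed dimension exceeds that of $R_r$. Then $\dim(S) \le \dim(S_1)+\dim(S_2)$, and iterating through the inductive bounds gives the full dimension inequality. The crux is then the branching estimate
\[
U(S_1,R_{u_1}) + U(S_2,R_{u_2}) \;\ge\; U(S,R_r) \;-\; 2q\,g(Zq),
\]
which, combined with $M(\HH) = 1 + M(\HH_1) + M(\HH_2)$ and the inductive cost bounds applied to the $\HH_i$, delivers the desired inequality for $\HH$.

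To prove this branching estimate I would first observe that because $g$ is decreasing, shifting an increasing path in $\RR_+^2$ by a non-negative vector only decreases the integrand $g(y)\,dx + g(x)\,dy$; concatenating the shifted optimal paths for $(S_1,R_{u_1})$ and $(S_2,R_{u_2})$ therefore gives
\[
U(S_1,R_{u_1}) + U(S_2,R_{u_2}) \;\ge\; W_g\bigl(q(\dim S_1 + \dim S_2),\; q(\dim R_{u_1} + \dim R_{u_2})\bigr).
\]
Since $R_r = [R_{u_1}\cup R_{u_2}]$ forces $\dim(R_r) \le \dim(R_{u_1}) + \dim(R_{u_2})$ componentwise, monotonicity of $W_g$ in its endpoint (valid because $g \ge 0$) yields $U(S,R_r)$ on the right exactly in the case when no truncation was needed. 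The main obstacle is the truncation case: one must verify that the $W_g$-mass lost by forcing the combined pod inside $R_r$ is at most $2q\,g(Zq)$. Here one uses that a truncated coordinate has length at most $2Z$ (since it is bounded by the sum of the two seed short-sides, each at most $Z$ by property~$(j)$), together with the crude bound $g \le g(Zq)$ on $[Zq,\infty)$; careful book-keeping of the three rectangles $R_{u_1},R_{u_2},R_r$ and their pods then yields the factor $2q\,g(Zq)$ per branching vertex.
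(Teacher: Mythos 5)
Your overall structure (induction on the size of $G_\HH$, with seeds as base case, subadditivity of $W_g$ along concatenated paths for the degree-one step, and a shifted-path superadditivity argument for the branching step) is the right skeleton and matches Holroyd's Lemma~37. The base and single-child cases are correct. However, the branching-case error analysis contains concrete errors.

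First, the assertion that ``$R_r = [R_{u_1}\cup R_{u_2}]$ forces $\dim(R_r) \le \dim(R_{u_1}) + \dim(R_{u_2})$ componentwise'' is false. Take $R_r$ a $1\times L$ strip and $R_{u_1}, R_{u_2}$ two sub-segments of lengths $\ell_1,\ell_2$ separated by a single empty site, so $\ell_1 + \ell_2 = L-1$ and $[R_{u_1}\cup R_{u_2}] = R_r$. Then in the second coordinate $\dim(R_r) = L > L-1 = \dim(R_{u_1}) + \dim(R_{u_2})$. More generally, closure under the bootstrap rule can fill a gap, producing a rectangle whose dimensions exceed the componentwise sum by up to an additive constant. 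This slack is precisely what the $2q\,g(Zq)$ error term per branching vertex is designed to absorb, and your argument silently assumes it away. Once this inequality fails, your appeal to monotonicity of $W_g$ in its second argument (itself a statement you assert but do not justify, and which is delicate because $g$ is decreasing) no longer gets you from $W_g\bigl(q(\dim S_1 + \dim S_2),\, q(\dim R_{u_1}+\dim R_{u_2})\bigr)$ down to $U(S,R_r)$ without cost.

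Second, the truncation estimate ``a truncated coordinate has length at most $2Z$, since it is bounded by the sum of the two seed short-sides'' is wrong on two counts. The pods $S_1,S_2$ are not seeds; their dimensions are bounded only by sums over \emph{all} seeds of the respective sub-hierarchies and can be arbitrarily larger than $Z$. And even for an actual seed $R_u$, property $(j)$ controls only $\sht(R_u) \le Z$, while $\lg(R_u)$ is unconstrained. So the claimed $2Z$ bound on a truncated coordinate has no basis, and the ``crude bound $g \le g(Zq)$'' applied on $[Zq,\infty)$ does not cover the regime you actually need (the relevant argument of $g$ can be $q\,\sht(R_r)$, which may be far below $Zq$). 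The correct bookkeeping requires a finer comparison of $W_g$ values — essentially Holroyd's structural analysis of the variational problem — rather than the coarse truncation bound you propose. As it stands, the branching estimate is not established.
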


We remark that although the rectangle $S$ is not necessarily unique, Lemma~\ref{Holpod} allows us to select such a rectangle $S(\HH)$ for each good hierarchy $\HH \in \HH(R,T,Z)$. We shall refer to this rectangle as `the pod of $\HH$'.

We shall use the following observation to bound $U(S,R)$ from below, and again later in the proof of Theorem~\ref{sharper}.

\begin{obs}\label{intobs}
There exists $C > 0$ such that, for every $0 < a < \infty$, we have
$$\int_0^a g(z) \, dz \; \le \; \frac{a}{2} \log\left( 1 + \frac{1}{a} \right) \,+\, Ca.$$
\end{obs}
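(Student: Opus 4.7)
The plan is to split into cases based on the size of $a$, using the two asymptotic regimes of $g$ already recorded in the paper: the small-$z$ expansion in Observation~\ref{gobs}, and the exponential tail $g(z)\le 2e^{-z}$ for large $z$ (cited from \cite{d=r=3} just after the definition of $g$).

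First, I would fix a small constant $z_0>0$ for which Observation~\ref{gobs} applies, and handle the regime $a\le z_0$ by direct integration of the estimate $g(z)\le\log(1/\sqrt z)+z$. Since
\[
\int_0^a\log(1/\sqrt z)\,dz \;=\; -\tfrac12\bigl[z\log z - z\bigr]_0^a \;=\; \tfrac{a}{2}\log(1/a)+\tfrac{a}{2},
\]
one gets $\int_0^a g(z)\,dz \le \tfrac{a}{2}\log(1/a)+\tfrac{a}{2}+\tfrac{a^2}{2}$. Using $\log(1/a)\le\log(1+1/a)$ and $a\le z_0\le 1$ to absorb the lower-order terms gives the claim for some absolute constant $C$.

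Next, for $a\ge z_0$, the point is simply that the integral is uniformly bounded: split
\[
\int_0^a g(z)\,dz \;\le\; \int_0^{1} g(z)\,dz \;+\; \int_{1}^{\infty} 2e^{-z}\,dz,
\]
where the first summand is finite by the small-$z$ case applied to $a=1$ (or by direct inspection, since $g$ is integrable near $0$), and the second summand is $2/e$. Hence there is an absolute constant $K$ with $\int_0^a g(z)\,dz\le K$ for all $a\ge z_0$, and choosing $C\ge K/z_0$ gives $\int_0^a g(z)\,dz\le Ca\le \tfrac{a}{2}\log(1+1/a)+Ca$.

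The only minor subtlety is making sure the constant in the small-$z$ estimate of Observation~\ref{gobs} and the constant implicit in $g(z)\le 2e^{-z}$ can be combined into a single $C$; this is routine since both are valid from some fixed point onward and $g$ is continuous (hence bounded) on any compact interval away from $0$ and $\infty$. I do not foresee any serious obstacle here — the statement is essentially a calculus exercise whose only content is matching the $\tfrac{a}{2}\log(1+1/a)$ term to the leading $-\tfrac12 \log z$ singularity of $g$ at the origin.
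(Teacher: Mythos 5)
Your proof is correct and follows essentially the same route as the paper's: both split at a small cutoff where Observation~\ref{gobs} applies, integrate the bound $g(z)\le\log(1/\sqrt z)+z$ directly on the small-$a$ range, and bound the remaining range by a linear term. The only minor difference is that for large $a$ the paper uses monotonicity of $g$ (giving $\int_\eps^a g\le a\,g(\eps)$) whereas you invoke the exponential tail to bound $\int_0^\infty g$ by an absolute constant and then divide by $z_0$; both are routine and lead to the same conclusion.
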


\begin{proof}
Let $\eps > 0$ be such that Observation~\ref{gobs} holds when $z \le \eps$. Then, if $a \le \eps$ we have
$$\int_0^a g(z) \, dz \; \le \;  \frac{1}{2} \int_0^a -\log z + 2z \, dz \; \le \; \frac{a}{2} \log \frac{1}{a} \,+\, a \,+\, a^2,$$
as required. Moreover, since $g$ is decreasing, we have
$$\int_\eps^a g(z) \, dz \; \le \; a g(\eps),$$
and so the observation follows, since if $a > \eps$ then $\int_0^a g(z) \, dz \le 1 + ag(\eps) = O(a)$.
\end{proof}

Finally, we shall use the following lemma, which follows from Lemma~16 of~\cite{Hol} (see also Lemma~7 of~\cite{GH2}).

\begin{lemma}\label{USR}
Let $q > 0$ and $S \subset R$, with $\dim(S) = (a,b)$ and $\dim(R) = (A,B)$, where $A \le B$. If $b \le A$, then
$$\frac{1}{q} U(S,R) \; \ge \; \frac{2}{q} \, \int_0^{Aq} g(z) \, dz \, + \, \big(B-A\big) g(Aq) \, - \, \frac{\phi(S)}{2} \log \left( 1 + \frac{1}{\phi(S)q} \right) \, - \, O\big( \phi(S) \big).$$
If $b > A$, then
$$\frac{1}{q} U(S,R) \; \ge \; (A - a)g(bq) \, + \, \big(B-b\big) g(Aq).$$
\end{lemma}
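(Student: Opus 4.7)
The statement reformulates the variational bounds of Lemma~16 of \cite{Hol} in our $U$-notation, so my plan is to deduce both cases directly from that lemma together with the analysis of $W_g$ carried out in Section~6 of \cite{Hol}. Recall that $U(S,R)=W_g\bigl((aq,bq),(Aq,Bq)\bigr)$, and that $W_g$ is the infimum, over increasing piecewise-linear paths $\gamma$ from $(aq,bq)$ to $(Aq,Bq)$, of $\int_\gamma \bigl( g(y)\,dx + g(x)\,dy \bigr)$. Since $g$ is decreasing, the qualitative structure of the optimal path depends on whether the starting point $(aq,bq)$ sits below the diagonal $y=x$ relative to the short dimension $Aq$ of $R$, which is precisely what motivates the split into the two cases.

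For Case~2, where $b>A$, every increasing path from $(aq,bq)$ to $(Aq,Bq)$ lies in the strip $\{(x,y):x\le Aq<bq\le y\}$. I expect the extremal to be the horizontal-then-vertical path $(aq,bq)\to(Aq,bq)\to(Aq,Bq)$, whose cost is exactly $(A-a)q\,g(bq)+(B-b)q\,g(Aq)$, matching the claimed lower bound after division by $q$. The crux is verifying that no other increasing path does better. Since the naive pointwise estimates $g(y)\ge g(Bq)$ and $g(x)\ge g(Aq)$ only give the strictly weaker bound $(A-a)g(Bq)+(B-b)g(Aq)$, one must exploit the monotonicity of the extremal more carefully; the plan is to invoke the rearrangement step from the proof of Lemma~16 of \cite{Hol}, which shows that shifting any $dy$-increment taken at $x<Aq$ to a later point where $x=Aq$ does not increase the integrated cost, driving the optimum to the H-then-V shape.

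For Case~1, where $b\le A$, the extremal path has a different structure: it runs roughly along the diagonal $y=x$ up to $(Aq,Aq)$, and then vertically from $(Aq,Aq)$ to $(Aq,Bq)$. Treating the diagonal segment as if it began at the origin, its cost is $2\int_0^{Aq}g(z)\,dz$, and the vertical segment contributes $(Bq-Aq)g(Aq)$; dividing by $q$ yields the main terms $(2/q)\int_0^{Aq}g(z)\,dz+(B-A)g(Aq)$. The correction for actually starting at $(aq,bq)$ rather than the origin is bounded above by the cost of a short diagonal route from $(0,0)$ to $(aq,bq)$, which by Observation~\ref{intobs} is at most $(\phi(S)/2)\log\bigl(1+1/(\phi(S)q)\bigr)+O(\phi(S))$. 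This is the source of the final two correction terms in the stated bound.

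The main obstacle I anticipate is the rearrangement step in Case~2, since the trivial coordinate-wise minimum bounds are off by a factor involving the ratio $g(bq)/g(Bq)$, which need not be close to $1$ in the regime of interest. Rather than redo the variational calculus from scratch, I would import this output directly from Lemma~16 of \cite{Hol}; the remainder of the argument is then just book-keeping, using Observation~\ref{intobs} to convert the diagonal-path integrals into the explicit error terms required by the statement, and checking that the translation between the $W_g$ language and our $U$-language preserves all constants.
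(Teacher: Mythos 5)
Your treatment of Case~2 ($b>A$) is sound and essentially matches the paper: both compute the cost of the horizontal-then-vertical path $(aq,bq)\to(Aq,bq)\to(Aq,Bq)$ and appeal to Lemma~16 of \cite{Hol} to certify that this path realises the infimum, so its cost is a genuine lower bound for $U(S,R)$.

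Case~1 contains a genuine gap in the step where you bound the correction for starting at $(aq,bq)$ rather than the origin. You assert that the cost of a ``short diagonal route from $(0,0)$ to $(aq,bq)$'' is at most $(\phi(S)/2)\log\bigl(1+1/(\phi(S)q)\bigr)+O(\phi(S))$. When $a\ne b$ no diagonal path reaches $(aq,bq)$; the cheapest increasing path from the origin (take $a\le b$ without loss of generality) is $(0,0)\to(aq,aq)\to(aq,bq)$, whose cost divided by $q$ is $\tfrac{2}{q}\int_0^{aq}g\,dz+(b-a)g(aq)$. By Observations~\ref{intobs} and~\ref{gobs} this is of order $\tfrac{\phi(S)}{2}\log\bigl(1+1/(aq)\bigr)+O(\phi(S))$, with the \emph{smaller} side $aq$ in the logarithm, not $\phi(S)q$. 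When $a\ll b$ these differ by roughly $\tfrac{\phi(S)}{2}\log(\phi(S)/a)$, which is not $O(\phi(S))$: for instance with $a=1$ and $b\approx 1/q$ the true cost is of order $q^{-1}\log(1/q)$ while your claimed bound is $O(1/q)$. So the triangle-inequality decomposition $U(S,R)\ge W_g\bigl((0,0),(Aq,Bq)\bigr)-W_g\bigl((0,0),(aq,bq)\bigr)$, although valid, is too lossy. This extra $\log(1/q)$ factor would be fatal in the application: in Case~2 of the proof of Proposition~\ref{integral}, the quantity $\tfrac{\phi(S)}{2}\log\bigl(1+1/(\phi(S)q)\bigr)$ (after replacing $\phi(S)$ by $X(\HH)$) must be dominated by $X(\HH)g(Zq)$, and with $g(Zq)\approx\tfrac12\log(1/(Zq))$ the constant $\tfrac12$ and the argument $\phi(S)q$ inside the logarithm are both essential.

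The paper's proof avoids this by computing the cost of the optimal path from $(aq,bq)$ directly, namely $(aq,bq)\to(bq,bq)\to(Aq,Aq)\to(Aq,Bq)$, which gives
\begin{equation*}
\frac{1}{q}U(S,R)\;=\;(b-a)g(bq)\,+\,\frac{2}{q}\int_{bq}^{Aq}g(z)\,dz\,+\,(B-A)g(Aq).
\end{equation*}
Writing $\int_{bq}^{Aq}=\int_0^{Aq}-\int_0^{bq}$, the correction to be controlled is $\tfrac{2}{q}\int_0^{bq}g-(b-a)g(bq)$. Unlike in your approach this carries a \emph{negative} contribution $-(b-a)g(bq)$ coming from the initial horizontal leg, and both $g$ and the integral are evaluated at $bq$, the \emph{larger} side of $S$. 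Combining Observations~\ref{intobs} and~\ref{gobs} then gives $\tfrac{\phi(S)}{2}\log\bigl(1+1/(bq)\bigr)+O(\phi(S))$, and since $b\ge\phi(S)/2$ one has $\log\bigl(1+1/(bq)\bigr)\le\log 2+\log\bigl(1+1/(\phi(S)q)\bigr)$, yielding the stated bound. This partial cancellation between $(b-a)g(bq)$ and $\tfrac{2}{q}\int_0^{bq}g$, with the argument of $g$ being $bq$, is precisely what your decomposition misses.
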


\begin{proof}
Assume first that $b \le A$. By Lemma~16 of~\cite{Hol}, the path integral is minimized by paths which follow the main diagonal as closely as possible. Assuming for simplicity that $a \le b$, by following the piecewise linear path $(aq,bq) \to (bq,bq) \to (Aq,Aq) \to (Aq,Bq)$ we obtain
$$\frac{1}{q} U(S,R) \; \ge \; (b - a)g(bq) \,+\,\frac{2}{q} \, \int_{bq}^{Aq} g(z) \, dz \, + \, \big(B-A\big) g(Aq).$$
Now, by Observation~\ref{intobs}, we have
$$\frac{2}{q} \int_0^{bq} g(z) \, dz \; \le \; b \log \left( 1 + \frac{1}{bq} \right) \,+\, O(b),$$
and by Observation~\ref{gobs} we have $g(bq) \ge \frac{1}{2}\log(1 + 1/bq) - O(1)$. (Note that inequality is trivial if $bq$ is not sufficiently small.) Hence
$$(b-a)g(bq) \,-\, \frac{2}{q} \int_0^{bq} g(z) \, dz \; \ge \; - \frac{a+b}{2} \log \left( 1 + \frac{1}{bq} \right)  \, - \, O(b),$$
as required. The inequality for $b > A$ can be obtained by following the path $(aq,bq) \to (Aq,bq) \to (Aq,Bq)$, and applying Lemma~16 of~\cite{Hol}.
\end{proof}

\section{The proof of Theorem~\ref{sharper}}\label{proofsec}

In this section we shall put together the pieces and prove Theorem~\ref{sharper}. Recall that, given $p > 0$, we define $q = -\log(1 - p) \sim p$ as $p \to 0$.

\begin{prop}\label{integral}
Let $C > 0$ and $\eps > 0$ be constants, let $p = p(C,\eps) > 0$ be sufficiently small, and let $R$ be a rectangle with dimensions $(a,b)$, where
$$\frac{\eps}{q} \; \le \; a \; \le \; b \; \le \; \ds\frac{C}{q} \log \left( \frac{1}{q} \right).$$
Then
$$\Pr_p\Big( [A \cap R] = R\Big) \; \le \; \exp\left( - \left[ \frac{2}{q} \int_0^{aq} g(z) dz \,+\, (b - a)g(aq) \right] \,+\, \frac{O_C(1)}{\sqrt{q}} \left( \log \frac{1}{q} \right)^3 \right).$$
\end{prop}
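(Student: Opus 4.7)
The plan is to apply Lemma~\ref{basic} with parameters $T = \sqrt{q}$ and $Z = \log^3(1/q)/\sqrt{q}$, as suggested in Section~\ref{hiersec}. This writes $\Pr_p([A \cap R] = R)$ as a sum over good hierarchies $\HH \in \HH(R,T,Z)$, each contributing $\prod_{\textup{pairs}} \Pr_p(D(R_v,R_u)) \prod_{\textup{seeds}} \Pr_p(I(R_u))$. I would apply Lemma~\ref{crossing} to each crossing factor and the new seed bound Lemma~\ref{seeds} to each seed factor, producing for each $\HH$ an exponential bound of the form $\exp(-E_{\textup{main}}(\HH) + E_{\textup{err}}(\HH))$, where $E_{\textup{main}}$ collects the principal exponents $\sum_{\textup{pairs}}(sg(bq)+tg(aq))$ and $\sum_{\textup{seeds}}\phi(R_u) g(a_u q)$.

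Next, I would combine the pair contributions via the pod $S = S(\HH)$. By Observation~\ref{triv} and Lemma~\ref{Holpod},
$$\sum_{\textup{pairs}} \big(sg(bq)+tg(aq)\big) \; \ge \; \frac{U(S,R)}{q} \,-\, 2M(\HH)\, g(Zq),$$
and then Lemma~\ref{USR} (applied in the regime $\lg(S) \le a$, the complementary regime being handled by the seed suppression below) gives
$$\frac{U(S,R)}{q} \; \ge \; \frac{2}{q}\int_0^{aq} g(z)\,dz \,+\, (b-a)g(aq) \,-\, \frac{\phi(S)}{2}\log\Big(1+\frac{1}{\phi(S)q}\Big) \,-\, O(\phi(S)).$$
The first two terms on the right are exactly the target exponent from the proposition; the remaining terms constitute a ``pod error'' that must be absorbed.

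The crux of the argument is to show that the seed contributions $\sum_{\textup{seeds}} \phi(R_u) g(a_u q)$ absorb this pod error. By Observation~\ref{gobs}, $\phi(R_u) g(a_u q) \ge \frac{1}{2} \phi(R_u)\log(1/a_u q) - O(\phi(R_u))$, and $\sum_u \phi(R_u) \ge \phi(S)$ follows from $\dim(S) \le \sum_u \dim(R_u)$. Combining these with a convexity argument on $x \mapsto x \log(1/xq)$ (valid since $x \le Z$ is in the increasing regime $xq \le 1/e$) shows that $\sum_u \frac{1}{2} \phi(R_u)\log(1/a_u q)$ dominates $\frac{1}{2}\phi(S)\log(1/\phi(S) q)$ up to an $O(\sum_u \phi(R_u))$ error. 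The crucial additional input is Lemma~\ref{seeds}: the factor $\exp(-\phi(R_u) g(a_u q))$, together with $g(a_u q) \ge g(Zq) = \Theta(\log(1/q))$, strongly suppresses hierarchies for which $\sum_u \phi(R_u) \gg Z$, so the dominant contributions come from hierarchies with total seed mass $O(Z) = O(\log^3(1/q)/\sqrt{q})$, keeping all residual errors within budget.

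Finally, I would control the remaining error terms and the combinatorial factor from summing over hierarchies. The linear error $2(g(bq)+g(aq))$ from Lemma~\ref{crossing}, summed over pairs, is $O(N(\HH) \log(1/q))$; via Observation~\ref{e^g} together with $s,t = O(T\cdot \dim(R_u))$, the quadratic error $(qst)e^{2g(bq)+2g(aq)}$ contributes $O(T^2 \log^2(1/q)) = O(q \log^2(1/q))$ per pair. Using Lemma~\ref{height} to bound $h(\HH) = O(\log(1/q)/\sqrt{q})$, together with the seed-mass control of the previous paragraph to bound $M(\HH)$, all error terms fit inside the budget $O(\log^3(1/q)/\sqrt{q})$. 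The main obstacle is this last accounting step: the number of hierarchies is $\exp(O(N(\HH) \log \phi(R)))$ and must be balanced against the seed suppression, and this balance is exactly what the careful choice $Z = \log^3(1/q)/\sqrt{q}$ and $T = \sqrt{q}$ is designed to make work.
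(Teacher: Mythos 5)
Your high-level skeleton matches the paper's: apply Lemma~\ref{basic} with $T=\sqrt q$ and $Z=\log^3(1/q)/\sqrt q$, feed each factor through Lemmas~\ref{crossing} and~\ref{seeds}, pass the crossing terms through Observation~\ref{triv} and Lemma~\ref{Holpod} to the pod, lower-bound $U(S,R)$ by Lemma~\ref{USR}, and then argue that the seed suppression pays for everything. But there is a genuine gap at exactly the step you flag as ``the main obstacle,'' and it is not a matter of bookkeeping: you never explain how the crossing-error term $O_C\bigl(N(\HH)\log^2(1/q)\bigr)$ and the hierarchy-count factor $\exp\bigl(O(N(\HH)\log(1/q))\bigr)$ are controlled by the seed mass, and in fact without a new idea they are not. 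A priori a hierarchy can have many degree-one chains and many \emph{small} seeds (each contributing only $\phi(R_u)=O(1)$ to $X(\HH)$), so $N(\HH)$ can be of order $X(\HH)\cdot h(\HH)\approx X(\HH)\log(1/q)/\sqrt q$. Then $N(\HH)\log^2(1/q)$ is of order $X(\HH)\log^3(1/q)/\sqrt q$, which utterly swamps the seed suppression $X(\HH)\,g(Zq)=O\bigl(X(\HH)\log(1/q)\bigr)$. So ``total seed mass $O(Z)$'' does \emph{not} by itself put $N(\HH)$ into budget.

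The paper's fix is a small but essential combinatorial observation that you do not have: defining a \emph{large seed} to be a seed $R_u$ with $\phi(R_u)\ge Z/3$, one shows that every non-seed vertex of a good hierarchy lies above at least one large seed (because in a degree-one step the semi-perimeter drops by at most a factor $1-2T$, and in a split $\phi(R_v)+\phi(R_w)\ge\phi(R_u)-2>Z-2$). Hence with $m(\HH)$ large seeds and height $h(\HH)=O(\log(1/q)/\sqrt q)$ one gets $N(\HH)=O\bigl(m(\HH)h(\HH)\bigr)$, so \emph{both} the crossing error and the entropy of the hierarchy count scale like $m(\HH)Z$, which is comparable to $X(\HH)\ge m(\HH)Z/3$. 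Only then does the seed suppression (via the monotonicity of $x\mapsto x\log(x/(ux+v))$, applied to the combination of $Xg(Zq)$ with the pod error) produce the genuinely negative term $-m(\HH)Z\log m(\HH)/6$ that dominates $O_C(m(\HH)Z)$ for large $m(\HH)$, after which one sums geometrically over $m(\HH)$. Your ``convexity/subadditivity'' comparison of $\sum_u\phi(R_u)\log(1/a_uq)$ with $\phi(S)\log(1/\phi(S)q)$ yields at best cancellation up to $O(X(\HH))$, not a strictly negative surplus, so even if $N(\HH)$ were controlled you would still lack the gain needed to beat the $O_C(m(\HH)Z)$ error. (Two minor points: the per-pair quadratic term is $O_C(\log^2(1/q))$, not $O(q\log^2(1/q))$, since $qst\,e^{2g(bq)+2g(aq)}\le 16B^2T^2/q=O_C(\log^2(1/q))$; and note that $a_u\le\phi(R_u)$ but seeds are not constrained to have $\phi(R_u)\le 2Z$, only $\sht(R_u)\le Z$, so the range over which your monotonicity claim for $x\log(1/xq)$ is invoked needs care.)
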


We remark that the constant implicit in the $O_C(1)$ term depends on the constant $C$, but not on the variables $p$, $a$ and $b$ (and also not on the constant $\eps$).

We begin by defining some of the parameters we shall use. First, set $B = C\log(1/q)$, so that $a \le b \le B/q$, set $T = \sqrt{q}$, and set
$$Z \; = \; \frac{1}{ \sqrt{q} } \left( \log \frac{1}{q} \right)^3.$$
Let $S = S(\HH)$ denote the pod of a hierarchy $\HH$, given by Lemma~\ref{Holpod}.

\begin{lemma}\label{claim}
Let $C,\eps,p > 0$, $a,b \in \N$ and the rectangle $R$ be as in the statement of Proposition~\ref{integral}, and let $B$, $T$ and $Z$ be as defined above. Then
$$\Pr_p\big( I(R) \big) \; \le \; \sum_{\HH \in \HH(R,T,Z)} \exp \left[ - \frac{1}{q} U(S,R) \, + \, O_C\left( N(\HH) \left( \log\frac{1}{q} \right)^2 \right) \right] \prod_{\textup{ seeds }u} \Pr_p\big( I(R_u) \big).$$
\end{lemma}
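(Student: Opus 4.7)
The plan is to apply Lemma~\ref{basic} to express $\Pr_p(I(R))$ as a sum over good hierarchies $\HH \in \HH(R,T,Z)$ of a product of crossing probabilities $\Pr_p(D(R_v,R_u))$ (one for each single-child edge $u \to v$) and seed probabilities $\Pr_p(I(R_u))$. The seed factors already appear in the target bound, so the task reduces to bounding the product of crossing probabilities by $\exp\bigl[-\tfrac{1}{q}U(S,R) + O_C(N(\HH)(\log(1/q))^2)\bigr]$ for each hierarchy $\HH$.

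First I would apply Lemma~\ref{crossing} to each single-child edge $u \to v$, where $\dim(R_v)=(a,b)$ and $\dim(R_u)=(a+s,b+t)$, producing a main term $-sg(bq)-tg(aq)$ and two error terms in the exponent. Observation~\ref{triv} gives $sg(bq)+tg(aq) \ge \tfrac{1}{q}U(R_v,R_u)$, and summing over single-child edges and applying Lemma~\ref{Holpod} yields
$$\sum_{N_{G_\HH}^\->(u)=\{v\}} \bigl(sg(bq) + tg(aq)\bigr) \;\ge\; \frac{1}{q}U(S,R) \,-\, 2M(\HH)g(Zq).$$
Since $Zq = \sqrt{q}(\log(1/q))^3$, Observation~\ref{gobs} gives $g(Zq) = O(\log(1/q))$, so the correction $2M(\HH)g(Zq) = O(N(\HH)\log(1/q))$ is absorbed into the $O_C(N(\HH)(\log(1/q))^2)$ error budget.

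The remaining task is to bound the two error terms in Lemma~\ref{crossing} for each single-child edge. The additive term $2(g(bq)+g(aq))$ is $O(\log(1/q))$ per edge by Observation~\ref{gobs}, since $aq,bq \ge q$. For the quadratic term $qst \cdot e^{2g(bq)+2g(aq)}$, I would use the good-hierarchy conditions $(g)$ and $(h)$: for any single-child edge, $d(R_v,R_u) \le 2T = 2\sqrt{q}$, which forces $s = O(Ta)$ and $t = O(Tb)$, hence $st = O(qab)$. Combined with $e^{2g(aq)} \le 4B/(aq)$ from Observation~\ref{e^g} (valid since $aq \le bq \le B$), this yields $qst \cdot e^{2g(bq)+2g(aq)} = O_C((\log(1/q))^2)$ per edge. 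Summing both error types over the at most $N(\HH)$ single-child edges gives total error $O_C(N(\HH)(\log(1/q))^2)$, which combined with the main term produces the claimed bound.

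The only real obstacle is the quadratic error $qst \cdot e^{2g(bq)+2g(aq)}$: without the good-hierarchy constraint $d(R_v,R_u) \le 2T$, the factor $e^{2g(bq)+2g(aq)}$ alone could reach $\Theta(1/(abq^2))$, and $qst$ could be as large as $\Theta(qab)$ only after exploiting the bound on $d$; it is precisely the finer scale $T = \sqrt{q}$ in our hierarchies that keeps this term bounded by $O_C((\log(1/q))^2)$ per edge. Everything else is a matter of plugging together the lemmas assembled in Section~\ref{hiersec}.
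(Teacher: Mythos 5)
Your proposal is correct and follows essentially the same route as the paper's own proof: start from Lemma~\ref{basic}, control the crossing-probability product via Lemma~\ref{crossing}, use Observation~\ref{triv} together with Lemma~\ref{Holpod} to replace the edge-by-edge main terms with $-\tfrac{1}{q}U(S,R) + 2M(\HH)g(Zq)$, and absorb all error terms (the additive $2(g(bq)+g(aq))$, the quadratic $qst\,e^{2g(bq)+2g(aq)}$, and the $g(Zq)$ correction) into $O_C(N(\HH)(\log(1/q))^2)$ using Observations~\ref{gobs} and~\ref{e^g} together with the good-hierarchy bound $d(R_v,R_u)\le 2T$, i.e.\ $s=O(Ta)$, $t=O(Tb)$. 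You also correctly identify the key point — that the fine scale $T=\sqrt{q}$ is precisely what makes $qst\,e^{2g(bq)+2g(aq)}=O_C(B^2)$ per edge — which matches the paper's reasoning exactly.
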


\begin{proof}
First note that by Observation~\ref{triv} and Lemma~\ref{Holpod}, the pod $S = S(\HH) \subset R$ of $\HH$ satisfies
$$\sum_{N_{G_\HH}^\->(u_i) = \{v_i\}} s_ig(b_iq) + t_ig(a_iq) \; \ge \; \frac{1}{q} \sum_{N_{G_\HH}^\->(u) = \{v\}} U(R_v,R_u) \; \ge \; \frac{1}{q} U(S,R) \,-\, 2M(\HH) g(Zq),$$
where $(a_i,b_i)$ and $(a_i+s_i,b_i+t_i)$ are the dimensions of $R_{v_i}$ and $R_{u_i}$ respectively.

Now, by the definition of a hierarchy, we have $s_i \le 2T(a_i+s_i) \le 3Ta_i$, and similarly $t_i \le 3Tb_i$, for every pair $(u_i,v_i)$ with $N_{G_\HH}^\->(u_i) = \{v_i\}$. Recall that $g(z)$ is decreasing, so
$$\max\Big\{ g(Zq),g(a_iq),g(b_iq) \Big\} \; \le \; g(q) \; \le \; \log \frac{1}{q},$$
by Observation~\ref{gobs} (applied with $z = q$). Recall also that $a_i,b_i \le b \le B/q$.

By Observation~\ref{e^g}, it follows that $s_i e^{2g(a_iq)} \le 4Bs_i/a_iq \le 12BT/q$, and similarly $t_i e^{2g(b_iq)} \le 12BT/q$. Thus
$$g(Zq) \, + \, 2 g(a_iq) \, + \, 2 g(b_iq) \, + \, (qs_it_i)e^{2g(a_iq) + 2g(b_iq)} \; \le \; 5 \log\frac{1}{q} \, + \, O\left( \frac{B^2T^2}{q} \right),$$
and hence, since $T^2 = q$, $B = O_C(\log(1/q))$ and $M(\HH) \le N(\HH)$,
$$2M(\HH) g(Zq) \, + \, \sum_{N_{G_\HH}^\->(u) = \{v\}} \Big( 2g(bq) \,+\, 2g(aq) \,+\, (qst)e^{2g(bq) + 2g(aq)}  \Big) \; = \; O_C\left( N(\HH) \left( \log\frac{1}{q} \right)^2 \right).$$
Hence, by Lemma~\ref{crossing}, we have
$$\prod_{N_{G_\HH}^\->(u) = \{v\}} \Pr_p\big( D(R_v,R_u) \big) \; \le \; \exp \left[ - \frac{1}{q} U(S,R) \, + \, O_C\left( N(\HH) \left( \log\frac{1}{q} \right)^2 \right) \right],$$
and so the lemma follows by Lemma~\ref{basic}.
\end{proof}

We can now deduce Proposition~\ref{integral} from Lemma~\ref{claim}. The main difficulty lies in the fact that there are too many hierarchies: there could be as many as $2^{1/\sqrt{q}}$ vertices in $G_\HH$, and for each vertex $u$ we have many choices for the rectangle $R_u$. However, most of these hierarchies have many seeds, and those with many \emph{large} seeds have rather small weight in the sum. This turns out to be the key idea in the proof.

Indeed, given a hierarchy of $R$ which is good for $(T,Z)$, define a \emph{large seed} to be one with $\phi(R_u) \ge Z/3$. We make the following key observation.

\begin{obs}
Let $\HH \in \HH(R,T,Z)$, and assume that $4T \le 1$ and $Z \ge 6$. Then every vertex of $\HH$ is either a seed, or lies above at least one large seed.
\end{obs}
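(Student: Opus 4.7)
The plan is to induct on the number of vertices in the subtree of $G_\HH$ rooted at $u$. If $u$ is a leaf then $N_{G_\HH}^\->(u) = \emptyset$, so $u$ is itself a seed and the claim holds vacuously. Otherwise $u$ has at least one child. If some child $v$ of $u$ is not a seed, then the inductive hypothesis applies to $v$ (whose subtree is strictly smaller) and produces a large seed below $v$, which is then also a descendant of $u$; we are done. The remaining case, which I would treat directly, is when $u$ is not a seed but every child of $u$ is.

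Since $u$ is not a seed, condition $(j)$ gives $\sht(R_u) > Z$, and hence $\phi(R_u) \ge 2\sht(R_u) > 2Z$. First suppose $u$ has a unique child $v$; being a seed, $v$ satisfies $|N_{G_\HH}^\->(v)| = 0 \ne 1$, so $(h)$ gives $d(R_v,R_u) \le 2T$, meaning each dimension of $R_v$ is at least $(1 - 2T)$ times the corresponding dimension of $R_u$. Using $4T \le 1$ I would conclude
\[
\phi(R_v) \; \ge \; (1 - 2T)\,\phi(R_u) \; \ge \; \tfrac{1}{2}\phi(R_u) \; > \; Z \; \ge \; Z/3,
\]
so $v$ is large.

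Now suppose $u$ has two children $v$ and $w$, both seeds. By $(d)$ we have $[R_v \cup R_w] = R_u$, and since $R_v, R_w \subset R_u$ the rectangle $R_u$ is the bounding box of $R_v \cup R_w$. The key geometric input is the standard fact that for the $2$-neighbour bootstrap closure of two rectangles to equal their bounding box, the projections of $R_v$ and $R_w$ onto one coordinate axis must overlap in at least one cell, while the projections onto the other axis are separated by a gap of at most one cell. Translating each of these projection comparisons into an inequality on the relevant side-lengths of $R_u$, $R_v$ and $R_w$ and summing the two, one obtains
\[
\phi(R_v) + \phi(R_w) \; \ge \; \phi(R_u),
\]
so at least one of $v, w$ has semi-perimeter $\ge \phi(R_u)/2 > Z \ge Z/3$, and is therefore large. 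The only step needing any care is this elementary perimeter inequality in the two-child case; everything else is routine induction, and the hypotheses $4T \le 1$ and $Z \ge 6$ are used only to ensure the trivial numerical comparisons $(1-2T) \ge 1/2$ and $Z > Z/3$.
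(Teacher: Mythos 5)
Your proof is correct and follows essentially the same approach as the paper: identify that a non-seed vertex $u$ has a child whose semi-perimeter is at least $Z/3$, using condition $(h)$ in the one-child case and $(d)$ plus a perimeter inequality in the two-child case, and then descend. You make the induction explicit where the paper leaves the recursive descent implicit, which is a sound and arguably cleaner way to present it. One minor point worth noting: in the two-child case you claim $\phi(R_v)+\phi(R_w)\ge\phi(R_u)$, whereas the paper uses the safer $\phi(R_v)+\phi(R_w)\ge\phi(R_u)-2$; your informal geometric justification (projections overlap on one axis, gap at most one on the other) does not cover every configuration in which $[R_v\cup R_w]=R_u$ (for example both projections may overlap), so you would need to argue a bit more carefully to get the stronger bound, but either version yields $\max\{\phi(R_v),\phi(R_w)\}\ge Z/3$ once $Z\ge 6$ and $\phi(R_u)>Z$, so the conclusion is unaffected.
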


\begin{proof}
By the definition of a good hierarchy, either $u$ is a seed, or $\phi(R_u) > Z$ and either $N_{G_\HH}^\->(u) = \{v\}$ or $N_{G_\HH}^\->(u) = \{v,w\}$. In the former case we have
$$\phi(R_v) \; \ge \; \big( 1 - 2T \big) \phi(R_u) \; \ge \; \frac{Z}{2},$$
since $4T \le 1$. In the latter case, we have $\phi(R_v) + \phi(R_w) \ge \phi(R_u) - 2$, and so
$$\max\big\{\phi(R_v),\phi(R_w)\big\} \; \ge \; \frac{Z}{3},$$ as required.
\end{proof}

Let the number of large seeds in a hierarchy $\HH$ be denoted $m(\HH)$.

\begin{proof}[Proof of Proposition~\ref{integral}]
Let $R \subset [n]^2$, $p > 0$, $B = C\log(1/q)$, $T = \sqrt{q}$, and $Z = (1/\sqrt{q}) \big( \log \frac{1}{q} \big)^3$ be as described above, and suppose that $\HH$ is a hierarchy for $R$ which is good for the pair $(T,Z)$. Recall that $p > 0$ is chosen sufficiently small, and that $a \le b \le B/q$. By Lemma~\ref{height}, $\HH$ has height at most $(10 / \sqrt{q}) \log (1/q)$, and hence the number of vertices $N(\HH)$ in $G_\HH$ satisfies
\begin{equation}\label{eqN}
N(\HH) \; \le \; 2m(\HH) \cdot h(\HH) \; = \; O\left( \frac{m(\HH)}{\sqrt{q}} \log \frac{1}{q} \right).
\end{equation}
Therefore, the number of hierarchies with $m$ large seeds (i.e., with $m(\HH) = m$) is at most
\begin{equation}\label{eqcount}
\sum_N \left( \frac{B}{q} \right)^{4N} \; \le \; \exp\left( O(1) \frac{m}{\sqrt{q}} \left( \log\frac{1}{q} \right)^2 \right).
\end{equation}

Now, for each hierarchy $\HH$, define
$$X(\HH) \; := \; \sum_{\textup{seeds }u} \phi(R_u),$$
and note that $X(\HH) \ge \ds\frac{m(\HH)Z}{3}$, and that $\phi\big( S(\HH) \big) \le X(\HH)$, by Lemma~\ref{Holpod}. By Lemma~\ref{seeds}, for every seed $R_u$ we have
$$\Pr_p\big( I(R_u) \big) \; \le \; 3^{\phi(R_u)} \exp\Big( - \phi(R_u) g(Zq) \Big),$$
since $\sht(R_u) \le Z =o(1/q)$ as $q \to 0$, and $g(z)$ is decreasing in $z$. Thus
\begin{equation}\label{eqseeds}
\prod_{\textup{ seeds }u} \Pr_p\big( I(R_u) \big) \; \le \; 3^{X(\HH)} \exp\Big( -X(\HH) g(Zq) \Big).
\end{equation}
We split into two cases. The first is easier to handle, and we shall not have to approximate too carefully; in the second the calculation is much tighter.

\bigskip
\noindent \textbf{Case 1}: $\lg(S) > a$.

\medskip
We have, by Lemma~\ref{claim} combined with~\eqref{eqN} and~\eqref{eqseeds},
$$\Pr_p\big( I(R) \big) \; \le \; \sum_{\HH \in \HH(R,T,Z)} 3^{X(\HH)} \exp \Bigg[ - \,\frac{1}{q} U(S,R) \,-\, X(\HH) g(Zq) \, + \, O_C\left( \frac{m(\HH)}{\sqrt{q}} \left( \log\frac{1}{q} \right)^3 \right) \Bigg].$$
Recall that $a < \phi(S) \le X(\HH)$, by Lemma~\ref{Holpod}, and so $\ds\frac{1}{q} U(S,R) \ge \big( b - X(\HH) \big)g(aq)$, by Lemma~\ref{USR}. Hence the summand above is at most
$$3^{X(\HH)} \exp \Bigg[ - b g(aq)  \,-\, X(\HH) \Big( g(Zq) - g(aq) \Big) \, + \, O_C\left( \frac{m(\HH)}{\sqrt{q}} \left( \log\frac{1}{q} \right)^3 \right) \Bigg].$$
Now, since $g$ is decreasing, $X(\HH) \ge m(\HH)Z/3$ and $a/Z \ge q^{-1/3}$, by Observation~\ref{gobs} we have
$$X(\HH)\big( g(Zq) - g(aq) \big) \; \ge \; \frac{X(\HH)}{7} \log\left( \frac{1}{q} \right) \; = \; \frac{1}{o(1)} \,m(\HH)Z \; = \; \frac{1}{o(1)}\, \frac{m(\HH)}{\sqrt{q}} \left( \log \frac{1}{q} \right)^3$$
as $q \to 0$. It follows that
$$\Pr_p\big( I(R) \big) \; \le \; \sum_{\HH} \exp \left( - b g(aq) \, - \, \frac{X(\HH)}{8} \log\frac{1}{q} \right)\; \le \; \exp\left( - \frac{2}{q} \int_0^{aq} g(z) \, dz \, - \, b g(aq) \right),$$
as required. Indeed, we showed that $X(\HH) \big( g(Zq) - g(aq) \big)$ is at least $\frac{X(\HH)}{7} \log \frac{1}{q}$, and much bigger than $\frac{m(\HH)}{\sqrt{q}} \big( \log\frac{1}{q} \big)^3$, so the first inequality holds. For the last inequality, first note that
$$\frac{2}{q} \int_0^{aq} g(z) \, dz \; \le \; a \log\left( 1 + \frac{1}{aq} \right) \,+\, O(a) \; = \; o\left( X(\HH) \log \frac{1}{q}\right)$$
as $q \to 0$, by Observation~\ref{intobs}. Here we used the facts that $aq \ge \eps$ and $a < X(\HH)$. Finally, note that, by~\eqref{eqcount}, there are at most $e^{X}$ hierarchies with $X(\HH) = X$. Hence we obtain a geometrically decreasing sum over $X$, and the claimed bound follows.

\bigskip
\noindent \textbf{Case 2}: $\lg(S) \le a$.

\medskip
By Lemma~\ref{USR}, and since $\phi(S) \le X(\HH)$, we have
$$\frac{1}{q} U(S,R) \; \ge \; \frac{2}{q} \, \int_0^{aq} g(z) \, dz \, + \, \big(b-a\big) g(aq) \, - \, \frac{X(\HH)}{2} \log \left( 1 +  \frac{1}{X(\HH)q} \right) \, - \, O\big( X(\HH) \big).$$
Hence, by~\eqref{eqN},~\eqref{eqseeds} and Lemma~\ref{claim}, we have
\begin{align*}
& \Pr_p\big( I(R) \big) \, \le \sum_{\HH \in \HH(R,T,Z)} \exp\Bigg[  - \frac{2}{q} \, \int_{0}^{aq} g(z) \, dz \, - \, \big( b - a \big) g(aq) \, + \, \frac{X(\HH)}{2} \log \left( 1 + \frac{1}{X(\HH)q} \right) \\
& \hspace{6cm}  \,+\, O\big (X(\HH) \big) \, + \, O_C\left( \frac{m(\HH)}{\sqrt{q}} \left( \log\frac{1}{q} \right)^3 \right)  \,-\, X(\HH) g(Zq) \Bigg].
\end{align*}
But by Observation~\ref{gobs},
$$X(\HH) \left( \frac{1}{2} \log \left( 1 + \frac{1}{X(\HH)q} \right) \,+\, c_1 \, - \, g(Zq) \right) \; \le \; - \, \frac{X(\HH)}{2} \log \left( \frac{X(\HH)}{C_1 Z \big(1 + X(\HH)q \big)} \right),$$
where $C_1 = e^{2c_1+1}$. Note that, for any $u, v > 0$, the function $x \log\left( \frac{x}{u x + v} \right)$ is increasing if $x \ge ux + v$,  and recall that $X(\HH) \ge \ds\frac{m(\HH)Z}{3}$. Thus, the right-hand side above is decreasing in $X(\HH)$ if $m(\HH)$ is sufficiently large, and hence either $m(\HH) = O(1)$, or
$$- \, \frac{X(\HH)}{2} \log \left( \frac{X(\HH)}{C_1 Z \big(1 + X(\HH)q \big)} \right) \; \le \; - \, \frac{m(\HH)Z}{6} \log \left( \frac{m(\HH)}{4C_1} \right),$$
and $m(\HH)Zq \le 1$, or the left-hand side is at most $-\frac{m(\HH)Z}{6} \log(1/q)$.

Finally, recalling that $Z = \frac{1}{\sqrt{q}} \big( \log (1/q) \big)^3$, we have
$$ - \, \frac{m(\HH)Z}{6} \log \left( \frac{m(\HH)}{4C_1} \right) \,+\, O_C\left( \frac{m(\HH)}{\sqrt{q}} \left( \log\frac{1}{q} \right)^3 \right) \; \le \; \frac{O_C(1)}{\sqrt{q}} \left( \log\frac{1}{q} \right)^3 \,-\,  \frac{m(\HH)}{\sqrt{q}} \left( \log\frac{1}{q} \right)^3,$$
since either $m(\HH) = O_C(1)$, or the first (negative) term dominates.

Putting these various bounds together gives
\begin{align*}
& \Pr_p\big( I(R) \big) \, \le \sum_{\HH \in \HH(R,T,Z)} \exp\Bigg[  - \frac{2}{q} \, \int_{0}^{aq} g(z) \, dz \, - \, \big( b - a \big) g(aq) \, + \, \frac{O_C(1) - m(\HH)}{\sqrt{q}} \left( \log\frac{1}{q} \right)^3 \Bigg].
\end{align*}
Hence, using~\eqref{eqcount}, and summing over $m(\HH)$, we obtain
\begin{eqnarray*}
\Pr_p\big( I(R) \big) & \le & \exp\left[  - \,\frac{2}{q} \, \int_{0}^{aq} g(z) \, dz \, - \, \big( b - a \big) g(aq) \,+\, \frac{O_C(1)}{\sqrt{q}} \left( \log\frac{1}{q} \right)^3 \right],
\end{eqnarray*}
as required.
\end{proof}

Before deducing Theorem~\ref{sharper} from Proposition~\ref{integral}, we need to recall the following fact from~\cite{Hol}, and to make an easy observation.

\begin{lemma}[Proposition~5 of~\cite{Hol}]\label{pi}
$$\int_0^\infty g(z) \, dz \; = \; \frac{\pi^2}{18}.$$
\end{lemma}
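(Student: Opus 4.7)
The plan is to reduce the integral to classical dilogarithm values. Substituting $u = 1 - e^{-z}$ yields $\int_0^\infty g(z)\,dz = \int_0^1 -\log\beta(u)/(1-u)\,du$. Factoring $\beta(u) = \sqrt{u}\cdot(\sqrt{u}+\sqrt{4-3u})/2$ and writing $-\log\beta = -\tfrac{1}{2}\log u - \log((\sqrt{u}+\sqrt{4-3u})/2)$ splits this as $\pi^2/12 - A$, where the first term is $-\tfrac{1}{2}\int_0^1\log u/(1-u)\,du = \tfrac{1}{2}\operatorname{Li}_2(1) = \pi^2/12$ and
\[
A \;=\; \int_0^1 \frac{\log\bigl((\sqrt{u}+\sqrt{4-3u})/2\bigr)}{1-u}\,du.
\]
The goal is therefore to show $A = \pi^2/36$, so that $\int_0^\infty g = \pi^2/12 - \pi^2/36 = \pi^2/18$.

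For $A$ I would use the trigonometric substitution $\sqrt{u} = (2/\sqrt{3})\sin\theta$, $\theta\in[0,\pi/3]$. Two clean identities fall out: angle addition gives $(\sqrt{u}+\sqrt{4-3u})/2 = (2/\sqrt{3})\sin(\theta+\pi/3)$, so the log factor vanishes at both endpoints $\theta = 0$ and $\theta = \pi/3$; and the triple-angle identity $\sin 3\theta = \sin\theta(3-4\sin^2\theta)$ gives $1-u = \sin(3\theta)/(3\sin\theta)$, equivalently $du/(1-u) = (\cot\theta - 3\cot 3\theta)\,d\theta$. Integration by parts (with vanishing boundary, thanks to the two endpoint zeros of the log factor) moves the derivative onto the log. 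The product expansion $\sin 3\theta = 4\sin\theta\sin(\pi/3-\theta)\sin(\pi/3+\theta)$ then factorizes $\log\sin 3\theta$ into three pieces, two of which ($\log 4$ and $\log\sin(\pi/3+\theta)$) integrate to zero on $[0,\pi/3]$ via the elementary antiderivatives $\log\sin(\theta+\pi/3)$ and $\tfrac{1}{2}\log^2\sin(\theta+\pi/3)$, both of which take equal values at the symmetric endpoints $\pi/3$ and $2\pi/3$. A further symmetry substitution $\theta\mapsto\pi/3-\theta$ yields the reduction $A = -\int_0^{\pi/3}\cot(\theta+\pi/3)\log\sin\theta\,d\theta$.

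The main obstacle is this final trigonometric integral, which equals $-\pi^2/36$. The cleanest derivation writes $\cot(\theta+\pi/3) = i(1+\zeta e^{2i\theta})/(\zeta e^{2i\theta} - 1)$ with $\zeta = e^{2i\pi/3}$, substitutes the Fourier expansion $\log|2\sin\theta| = -\sum_{n\ge 1}\cos(2n\theta)/n$, and reduces the integral (via contour deformation or direct Fourier-mode-by-Fourier-mode integration) to dilogarithm values at sixth roots of unity — specifically to $\operatorname{Re}\operatorname{Li}_2(e^{i\pi/3}) = \pi^2/36$, which follows from the standard closed form $\operatorname{Re}\operatorname{Li}_2(e^{i\theta}) = \pi^2/6 - \pi\theta/2 + \theta^2/4$ on $[0,2\pi]$. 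The appearance of the sixth roots of unity is not an accident: it traces back to the cube-root-of-unity factorization $1+2\cos 2\theta = 1 + \omega + \omega^{-1}$ with $\omega = e^{2i\theta}$, which is already implicit in our triple-angle identities. Putting the pieces together gives $A = \pi^2/36$ and hence $\int_0^\infty g(z)\,dz = \pi^2/18$.
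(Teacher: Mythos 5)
This lemma is cited in the paper without proof (it is Proposition~5 of Holroyd's 2003 paper \cite{Hol}), so there is no in-text argument to compare against; what follows is an assessment on the merits. Your proposal is correct, and it takes a genuinely different, more explicitly computational route than Holroyd's, which works from the quadratic relation $\beta^2-\beta u+u^2-u=0$ and dilogarithm functional identities. Your decomposition $-\log\beta(u)=-\tfrac12\log u-\log\bigl((\sqrt u+\sqrt{4-3u})/2\bigr)$ correctly peels off $\tfrac12\operatorname{Li}_2(1)=\pi^2/12$, and the substitution $\sqrt u=(2/\sqrt3)\sin\theta$ does produce the clean identities you state: $(\sqrt u+\sqrt{4-3u})/2=(2/\sqrt3)\sin(\theta+\pi/3)$, and $1-u=\sin3\theta/(3\sin\theta)$, whence $du/(1-u)=(\cot\theta-3\cot3\theta)\,d\theta$. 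The integration by parts is legitimate (the log factor vanishes to first order at both endpoints, beating the logarithmic growth of the antiderivative), the factorization $\sin3\theta=4\sin\theta\sin(\pi/3-\theta)\sin(\pi/3+\theta)$ is correct, the $\log4$ and $\log\sin(\pi/3+\theta)$ pieces do integrate to zero, and the reflection $\theta\mapsto\pi/3-\theta$ gives $A=-\int_0^{\pi/3}\cot(\theta+\pi/3)\log\sin\theta\,d\theta$ as claimed. The final step is only sketched, but it is sound and can be made fully rigorous without contour methods: $\cot(\theta+\pi/3)$ is bounded on $[0,\pi/3]$ (the argument stays in $[\pi/3,2\pi/3]$, away from the poles of $\cot$), so termwise integration against $\log(2\sin\theta)=-\sum_{n\ge1}\cos(2n\theta)/n$ is justified, and with $s_n=\sin(2n\pi/3)$ a short Abel-summation computation yields
\[
A \;=\; \frac{\pi}{3}\sum_{n\ge1}\frac{s_n}{n}\;-\;2\sum_{n\ge1}\frac{s_n}{n}\sum_{k<n}\frac{s_k}{k}\;-\;\sum_{n\ge1}\frac{s_n^2}{n^2}
\;=\;\frac{\pi}{3}\cdot\frac{\pi}{6}-\Bigl[\bigl(\tfrac{\pi}{6}\bigr)^2-\tfrac{\pi^2}{9}\Bigr]-\frac{\pi^2}{9}\;=\;\frac{\pi^2}{36},
\]
matching $\operatorname{Re}\operatorname{Li}_2(e^{i\pi/3})$. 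What the trigonometric route buys is a transparent structural reason for the sixth root of unity and hence the constant $\pi^2/18$; what the algebraic route buys is brevity, once the relevant dilogarithm identities are taken as known.
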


The following observation follows almost immediately from Lemma~\ref{pi}.

\begin{obs}\label{final}
Let $p > 0$ be sufficiently small, and let $a,b \in \RR_+$, with $a \le b$ and $b \ge B/2p$, where $B = 10\log (1/p)$. Then
$$\frac{2}{q} \int_0^{aq} g(z) \, dz \,+\, (b - a)g(aq) \; \ge \; \frac{2\lambda}{q} \,-\, 1,$$
where $\lambda = \pi^2/18$.
\end{obs}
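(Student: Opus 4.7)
The plan is to use Lemma \ref{pi} to reformulate the inequality in terms of the tail integral of $g$, and then to bound this tail by splitting it at a carefully chosen midpoint. Since $\int_0^\infty g(z)\,dz = \lambda$, we may rewrite
$$\frac{2}{q}\int_0^{aq} g(z)\,dz \;=\; \frac{2\lambda}{q} \,-\, \frac{2}{q}\int_{aq}^\infty g(z)\,dz,$$
so the asserted inequality is equivalent to
$$(b-a)\,g(aq) \,+\, 1 \;\ge\; \frac{2}{q}\int_{aq}^\infty g(z)\,dz,$$
and it suffices to prove this reformulation.

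The key step is to split the tail at the midpoint $c := (aq+bq)/2$. On $[aq,c]$, monotonicity of $g$ yields
$$\int_{aq}^c g(z)\,dz \;\le\; (c-aq)\,g(aq) \;=\; \frac{(b-a)q}{2}\,g(aq),$$
so $\tfrac{2}{q}\int_{aq}^c g(z)\,dz \le (b-a)g(aq)$, which matches the first term on the left of the target inequality exactly. On $[c,\infty)$, the bound $g(z) \le 2e^{-z}$ (noted in the paper as valid for $z$ large) gives $\int_c^\infty g(z)\,dz \le 2e^{-c}$. Adding these two estimates, it then suffices to show that $\tfrac{4 e^{-c}}{q} \le 1$, i.e., $c \ge \log(4/q)$.

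The final step is to verify this lower bound on $c$ from the hypothesis. Because $q \ge p$ and $B = 10\log(1/p) \ge 10\log(1/q)$, we have $bq \ge bp \ge B/2 \ge 5\log(1/q)$. Since $a \le b$, this gives $c = (aq+bq)/2 \ge bq/2 \ge (5/2)\log(1/q)$, which exceeds $\log(4/q) = \log(1/q) + \log 4$ for all sufficiently small $q$, as required. The main subtlety, in my view, is the choice of the split point: taking $c$ much smaller wastes too much in the monotonicity bound on $\int_{aq}^c g$, while taking $c$ much larger leaves too big a tail for the exponential estimate. The midpoint is precisely what makes the first bound absorb exactly into $(b-a)g(aq)$ with no loss, so that the remaining tail is small enough to be covered by the additive constant $1$ on the left-hand side.
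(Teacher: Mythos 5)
Your argument is correct, and it takes a somewhat different route from the paper's. The paper splits into two cases according to whether $a \le B/4p$ or $a \ge B/4p$: in the first case it uses the uniform bound $\int_{aq}^\infty g(z)\,dz = O(g(aq))$ together with the fact that $b-a \ge B/4p$ is large, and in the second case it uses the exponential tail bound $g(z) \le 2e^{-z}$ directly at $z = aq$. Your version avoids the dichotomy entirely by splitting the \emph{integral} at the midpoint $c = (aq+bq)/2$: the first piece $\int_{aq}^c$ is absorbed exactly by $(b-a)g(aq)$ via monotonicity of $g$ (with no loss, by the choice of $c$), and the second piece $\int_c^\infty$ is controlled by the exponential bound, which applies because $c \ge bq/2 \ge \tfrac{5}{2}\log(1/q)$ regardless of the size of $a$. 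This makes the argument a single unified calculation, dispenses with the uniform estimate $\int_{aq}^\infty g = O(g(aq))$ that the paper needs, and substitutes a cleaner use of the constraint $b \ge B/2p$. Both proofs have comparable length, so the gain is mostly aesthetic, but the midpoint decomposition is a tidy observation that the paper does not make. One small thing worth spelling out: the bound $g(z) \le 2e^{-z}$ is only valid for $z$ above some absolute threshold, so you should note explicitly that $c \ge \tfrac{5}{2}\log(1/q)$ exceeds that threshold once $p$ (hence $q$) is sufficiently small — this is implicit in your ``$p$ sufficiently small'' hypothesis but deserves a sentence.
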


\begin{proof}
If $a \le B/4p$, then this follows since $\int_{aq}^\infty g(z) \, dz = O(g(aq))$, uniformly over $a \in (0,\infty)$, and so
$$(b - a) g(aq) \,-\, \frac{2}{q} \int_{aq}^\infty g(z) \, dz \; \ge \; \left( \frac{B}{4p} \right) g(aq) \,-\, O\left( \frac{g(aq)}{q}  \right) \; > \; 0.$$
If $a \ge B/4p$ then it holds because $g(z) \le 2e^{-z}$ for $z$ large, and so
$$\frac{2}{q} \int_{aq}^\infty g(z) \, dz \; \le \; \frac{4}{q} e^{-aq} \; \le \; \frac{4}{q} e^{-B/5} \; \le \; 1,$$
as required.
\end{proof}

Finally, we deduce Theorem~\ref{sharper} from Proposition~\ref{integral}.

\begin{proof}[Proof of Theorem~\ref{sharper}]
Let $C_2 > 0$ be a large constant to be chosen later, let $n \in \N$ be sufficiently large, and let
$$p \; = \; \frac{\pi^2}{18 \log n} \, - \, \frac{C_2 (\log\log n)^3}{(\log n)^{3/2}}.$$
Note that $q = -\log(1 - p) < p + p^2$, and so $q$ also satisfies this equality (with a slightly different constant $C_2$).

Let the elements of $A \subset [n]^2$ be chosen independently at random, each with probability $p$, and suppose that $A$ percolates. Then, by Lemma~\ref{k2k}, there exists a rectangle $R \subset [n]^2$, which is internally spanned by $A$, and with $B/2p \le \lg(R) \le B/p$, where $B = 10\log(1/p)$. Let $\dim(R) = (a,b)$, and assume without loss of generality that $a \le b$. There are at most $n^2 (B/p)^2$ potential such rectangles, and each is internally spanned with probability at most
$$\Pr_p\Big( [A \cap R] = R \Big) \; \le \; \exp\left( - \left[ \frac{2}{q} \int_0^{aq} g(z) dz \,+\, (b - a)g(aq) \right] \,+\, \frac{O(1)}{\sqrt{q}} \left( \log \frac{1}{q} \right)^3 \right)$$
if $\sht(R) \ge 1/q$, by Proposition~\ref{integral}, and with probability at most
$$e^{-bg(aq)} \; \le \; e^{-B/40p} \; = \; p^{1/4p} \; \le \; \left( \frac{1}{n} \right)^{100}$$
if $a = \sht(R) \le 1/q$ and $n$ is sufficiently large, since $g(aq) \ge g(1) = -\log\beta \big( \frac{e-1}{e} \big) > 1/20$ and $b \ge B/2p$. Note that, since we apply Proposition~\ref{integral} with $C = 10$, we obtain an absolute constant $O(1)$ in the expression above.

By Observation~\ref{final}, we have
$$\frac{2}{q} \int_0^{aq} g(z) dz \,+\, (b - a)g(aq) \; \ge \;  \frac{2\lambda}{q} \,-\, 1,$$
where $\lambda = \pi^2/18$. Thus, using the identity $\frac{1}{x-y} = \frac{1}{x} + \frac{y}{x(x-y)}$, this gives, as $n \to \infty$,
\begin{eqnarray*}
\Pr_p\Big( [A] = [n]^2 \Big) & \le & n^2 (B/p)^2 \exp\left( - \frac{2 \lambda}{q} \,+\, \frac{O(1)}{\sqrt{q}} \left( \log \frac{1}{q} \right)^3 \right)\\
& \le & n^2 (B/p)^2 \exp\left( - \, 2\log n \, - \, \frac{C_2}{\lambda} (\log\log n)^3\sqrt{\log n} \,+\, \frac{O(1)}{\sqrt{q}} \left( \log \frac{1}{q} \right)^3 \right) \\
& \le & n^2 (\log n)^3 \exp\bigg( - 2\log n \, - \, (\log\log n)^3\sqrt{\log n} \bigg) \; \to \; 0
\end{eqnarray*}
if $C_2$ is sufficiently large, as required.
\end{proof}

\section{Extensions and open questions}\label{Qsec}

In this paper we have studied bootstrap percolation on one particular graph, the two-dimensional grid with nearest-neighbour bonds. It is natural to ask whether our method can be applied to bootstrap percolation on other graphs; here we shall discuss two such possible generalizations.

The most obvious (and most extensively studied) generalization is to consider bootstrap percolation in $d$ dimensions (i.e., on the graph $[n]^d$), with nearest neighbour interaction and threshold $2 \le r \le d$ (as studied in, for example,~\cite{AL,d=r=3,alldr,CC,CM,Sch}). The sharp metastability thresholds for these models (with $d$ fixed, and as $n \to \infty$) were determined in~\cite{alldr}, and it is likely that the methods of this paper (and those of~\cite{GH1}) could be adapted to give improved bounds in the case of $r=2$ and general $d$.

\begin{prob}
Bound the second term in the asymptotic expansion of $p_c([n]^d,2)$ as $n\to\infty$.
\end{prob}

The case $r \ge 3$ is more complicated, and the following problem is likely to be difficult.

\begin{prob}
Bound the second term in the asymptotic expansion of $p_c([n]^3,3)$  as $n\to\infty$.
\end{prob}

We remark that in \emph{very} high dimensions ($d \gg \log n$, say) new ideas are required, and much less is known in general. However, results analogous to Theorem~\ref{sharper} have been proved in the special cases $r=2$ and $r = d$, see~\cite{Maj,cube}.

A second natural generalization is to consider bootstrap percolation in two dimensions, but with a different update rule. For example, in the `modified' bootstrap process (see~\cite{Holddim}), a vertex is infected if at least one of its neighbours in \emph{each} dimension is already infected; in the `$k$-cross' process (see~\cite{HLR,BM}), a vertex $v$ is infected if at least $k$ vertices in the cross-shaped set
$$\bigcup_{0 \neq j \in [-k+1,k-1]} \big\{ v + (0,j) , v + (j,0) \big\}$$
are previously infected; and in the Frob\"ose process (introduced by Frob\"ose~\cite{Frob} in 1989) a site of $[n]^2$ is infected if it has one already-infected neighbour in each dimension, along with the next-nearest neighbour in the corner between them. In general,
one could consider an arbitrary neighbourhood $N(v)$ of each vertex $v$, an arbitrary (monotone) family $\A(v)$ of subsets of $N(v)$, and say that $v$ becomes infected if the already-infected subset of its neighbours is in $\A(v)$.

Holroyd~\cite{Hol} (see also~\cite{Holddim}) determined the sharp threshold for the modified and Frob\"ose models, and Holroyd, Liggett and Romik~\cite{HLR} did so for the $k$-cross process for all fixed $k \in \N$. Moreover, Duminil-Copin and Holroyd~\cite{DCH} have recently shown, for a large family of such models (including all of the examples above, and other similar models), that there exists a sharp metastability threshold. It is not unreasonable to hope that our method (together with that of~\cite{GH1}) might yield improved bounds on the critical probability for a more general collection of bootstrap processes, of the type considered in~\cite{DCH}. Indeed, for two of the processes described above this is the case.

Let $p_c^{(\mathrm{F})}([n]^2)$ denote the critical probability for percolation in the Frob\"ose process on $[n]^2$, and let $p_c^{(+)}([n]^2,k)$ denote the critical probability for percolation in the $k$-cross process. The upper bounds in the following theorem were proved by Gravner and Holroyd~\cite{GH1} (for the Frob\"ose model) and by Bringmann and Mahlburg~\cite{BM} (for the $k$-cross process). The lower bounds follow by the methods of this paper.

\begin{thm}\label{frobthm}
$$p^{(\mathrm{F})}_c([n]^2) \; = \; \frac{\pi^2}{6\log n} \,-\, \frac{1}{(\log n)^{3/2+o(1)}}.$$
as $n \to \infty$. Let $k \in \N$, and let $\lambda_k = \pi^2 / 3k(k+1)$. Then
$$p^{(+)}_c([n]^2,k) \; = \; \frac{\lambda_k}{\log n} \,-\, \frac{1}{(\log n)^{3/2+o(1)}}$$
as $n \to \infty$.
\end{thm}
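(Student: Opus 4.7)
The plan is to import, for each of the two processes, the full machinery developed in Sections~\ref{seedsec}--\ref{proofsec} and verify that the model-specific ingredients satisfy the same structural properties. Four things must be checked: a column-crossing estimate of the form ``$\Pr(A \text{ crosses a rectangle of dimensions } (a,b)) \le e^{-ag(bq)}$'' with an appropriate function $g$; the perimeter-type seed bound of Lemma~\ref{ex}; the disjoint-spanning decomposition underpinning Proposition~30 of~\cite{Hol}; and the integral identity $\int_0^\infty g(z)\,dz = \lambda$ with the correct value of $\lambda$.

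For the Frobose process, once a rectangle $R$ is entirely infected, the column immediately to the right of $R$ becomes fully infected if and only if it contains at least one element of $A$, since any single infected site in that column then spreads vertically using the $L$-rule (with the corner of the $L$ lying in $R$). Hence the crossing probability equals $(1-(1-p)^b)^a$, yielding $g_F(z) = -\log(1-e^{-z})$, whose integral is computed via the substitution $u = e^{-z}$ to equal $\sum_{n\ge 1} n^{-2} = \pi^2/6$. The seed bound follows from the standard non-increasing-perimeter argument: whenever a site becomes infected under the Frobose rule it has at least two previously-infected neighbours, so the graph-theoretic perimeter changes by at most $4 - 2\cdot 2 = 0$, and hence $|A| \ge \phi(R)/2$ whenever $A$ internally spans $R$. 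The disjoint-spanning decomposition is available for the whole class of monotone two-dimensional update rules considered in~\cite{DCH}, so the hierarchy machinery of Section~\ref{hiersec} carries over verbatim. For the $k$-cross process, the corresponding function $g_k$ and the integral $\lambda_k = \pi^2/3k(k+1)$ were identified in~\cite{HLR}, while the appropriate perimeter-type seed bound is supplied by the analysis of~\cite{BM}.

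With these four ingredients in place, the proofs of Lemma~\ref{claim}, Proposition~\ref{integral}, Observation~\ref{final} and Theorem~\ref{sharper} go through line by line, with $g$ replaced by $g_F$ or $g_k$ and $\lambda = \pi^2/18$ replaced by $\pi^2/6$ or $\lambda_k$. The final Aizenman--Lebowitz-type union bound over rectangles $R$ with $\lg(R) \in [B/2p,B/p]$ then delivers the desired lower bound: setting $p$ equal to $\lambda/\log n - C_2 (\log\log n)^3/(\log n)^{3/2}$ for the appropriate $\lambda$ drives the percolation probability to zero. The main obstacle is the verification of the seed bound for the $k$-cross process, since the extended stencil of arm length $k-1$ makes the perimeter accounting less direct than for the standard $r=2$ rule; however, the argument of~\cite{BM} yields precisely the inequality that our method needs, and the remaining steps are then essentially translations of the corresponding steps for the $r=2$ case.
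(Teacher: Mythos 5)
Your high-level plan---transport the crossing estimate, the seed bound, the disjoint-spanning decomposition, and the integral identity, and rerun Sections~\ref{seedsec}--\ref{proofsec}---is the paper's strategy, and your identification of the crossing function $h(z)=-\log(1-e^{-z})$ for the Frob\"ose model and of the integral $\lambda_k$ for the $k$-cross model is correct. However, the seed bound you propose for the Frob\"ose model is a factor of two too weak, and this breaks the argument. You claim $|A\cap R|\ge\phi(R)/2$ via the standard non-increasing-perimeter count. That bound is correct but is the wrong order: for the Frob\"ose crossing function one has $h(z)\sim\log(1/z)$ as $z\to 0$, which is \emph{double} the corresponding rate $g(z)\sim\tfrac12\log(1/z)$ in the $r=2$ model, so the Frob\"ose analogue of Lemma~\ref{seeds} needs $\Pr_p(I(R_u))\lesssim(aq)^{\phi(R_u)}$ rather than $(aq)^{\phi(R_u)/2}$, and this requires $|A\cap R|\ge\phi(R)-1=m+n-1$, not $\phi(R)/2$. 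The perimeter count cannot yield $m+n-1$. The paper establishes the stronger bound either by induction on $\phi(R)$ via Proposition~30 of~\cite{Hol} (as in Lemma~7 of~\cite{BB}), or by showing the bipartite row--column incidence graph of $A$ within $R$ must be connected (if it splits as $X\times Y\cup X^c\times Y^c$ then that set is closed under the Frob\"ose rule and is a proper subset of $R$), hence has at least $m+n-1$ edges. Without one of these arguments your seed estimate is off by a factor of two in the exponent and the final calculation fails.

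The $k$-cross part also has a gap. First, the bound on $\Pr_p(I(R_u))$ is not supplied by~\cite{BM}: that paper proves the \emph{upper} bound on $p_c^{(+)}$, which does not require a seed bound, and the relevant estimate (that any set spanning $[m]\times[n]$ in the enhanced process has size roughly $(m+n)/k$) is derived here by the same inductive route. Second, and more importantly, you omit the essential step of coupling the $k$-cross process with the \emph{enhanced process} of Section~5 of~\cite{HLR}. In the raw $k$-cross model the closed sets are not rectangles, so the rectangle-based hierarchy machinery of Section~\ref{hiersec}---and in particular Proposition~30 of~\cite{Hol} and Lemma~\ref{exists}---does not apply directly; one must first pass to the enhanced process, in which they are. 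This coupling is the reason the approach works at all, and should appear explicitly in the argument.
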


In fact the bounds we prove (and those from~\cite{GH1,BM}) are a little stronger than those stated above; they are like the bounds in Theorem~\ref{sharper}.

\begin{proof}[Sketch of proof of Theorem \ref{frobthm}]
For the first part, it suffices to show that (in the Frob\"ose process) on $R = [m] \times [n]$, all spanning sets have size at least $m + n - 1$. The result then follows in exactly the same way as Theorem~\ref{sharper}. Indeed, simply replace the function $g$ by the function
$$h(z) \; = \; -\log\left( 1 - e^{-z} \right),$$
and note that a rectangle is crossed if, and only if, it has no empty column. The rest of the proof carries over essentially verbatim, the key point being that $\int_0^x h(z) \, dz  \sim x \log(1/x)$ when $x \to 0$, and $\Pr_p(I(R_u)) \approx \exp\big( -\phi(R_u) h(aq) \big)$ when $u$ is a seed, so the corresponding terms in the final calculation are of the same order.

We shall give two proofs that if $[A] = R$ then $|A| \ge m + n - 1$. The first is standard, using Proposition~30 of~\cite{Hol} and induction on $\phi(R)$ (see Lemma~7 of~\cite{BB} or Problem~35 of~\cite{CTM}). For the second, consider the (bipartite) graph $G$ whose vertices are the rows and columns of $R$, with an edge from row $x$ to column $y$ if and only if $(x,y) \in A$.

To prove that $G$ has at least $m + n - 1$ edges, we shall show that it is connected. Indeed, if $G$ is not connected then exists a set of rows $X$ and a set of columns $Y$ such that $A \subset S = (X \cap Y) \cup (X^c \cap Y^c)$. But then $[S] = S \neq R$, so $A$ does not percolate, as required.

For the second part, we need the following idea from~\cite{HLR}: first couple the $k$-cross process with an `enhanced process' (see~\cite{HLR}, Section~5) in which the closed sets are rectangles. In the enhanced process the minimum number of sites required to infect an $[m] \times [n]$ rectangle is about $(m + n)/k$, which is also the typical number required. (To prove this, apply the standard proof, by induction on $m + n$.)

The result now follows by the proof of Theorem~\ref{sharper}, replacing the function $g$ by the function $-\log f(e^{-z})$, where $f : [0,1] \to [0,1]$ is decreasing and satisfies
$$f^k \,-\, f^{k+1} \; = \; x^k \,-\, x^{k+1},$$
and noting that if a rectangle is crossed in the enhanced process, then it has no `$k$-gap' of $k$ successive empty columns (see Lemma~12 of~\cite{HLR}).

We obtain a sufficiently strong bound on $\Pr_p(I(R_u))$, where $u$ is a seed, using the proof of Lemma~\ref{seeds}; this works because our lower bound on $|A \cap R|$ is also the typical size of a percolating set in $R$ when $\phi(R) \ll 1/p$. It follows that the contribution of the large seeds to the final calculation is of the same order as that of the integral $\int_0^x -\log f(e^{-z}) \, dz$, where $x$ is the semi-perimeter of the pod. Modulo a little basic analysis, the rest of the proof works as above; we leave the details to the reader.
\end{proof}

Gravner and Holroyd~\cite{GH1} also improved the upper bounds for the modified process. However, the proof of Theorem~\ref{sharper}  does not work for the modified process, since we do not have a result analogous to Lemma~\ref{seeds}. In particular, it is possible to internally span an $m \times n$ rectangle with $\max\{m,n\}$ infected sites, but the proportion of such minimal-size sets  which percolate is very small.

 Let $p_c^{(\mathrm{M})}([n]^d)$ denote the critical probability for percolation in the modified bootstrap process on the graph $[n]^d$, i.e., the infimum over $p$ such that the probability of percolation is at least $1/2$. We have the following conjecture; it is the analogue of Conjecture~\ref{GHconj} for the modified process.

\begin{conj}\label{mod}As $n\to\infty$,
$$p^{(\mathrm{M})}_c([n]^2) \; = \; \frac{\pi^2}{6\log n} \,-\, \frac{1}{(\log n)^{3/2+o(1)}}.$$
\end{conj}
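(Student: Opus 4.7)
The plan is to adapt the proof of Theorem \ref{sharper} to the modified process by replacing the function $g$ throughout by $h(z) := -\log(1-e^{-z})$; the upper bound is already established in \cite{GH1}, so I focus on the lower bound. For modified bootstrap, an $a\times b$ rectangle is crossed from left to right iff every column contains an element of $A$, so the crossing probability is $(1-e^{-bq})^a = \exp(-a\,h(bq))$, yielding an immediate analogue of Lemma \ref{crossing}. Since $\int_0^\infty h(z)\,dz = \pi^2/6$, the leading constant in the final answer will be $\pi^2/(6\log n)$, as required. The decomposition underlying Lemma \ref{exists} (Proposition 30 of \cite{Hol}) holds for modified percolation with only cosmetic changes, so the hierarchy framework (Lemmas \ref{basic} and \ref{Holpod}) transfers; the path-integral estimates (Lemma \ref{USR} and Observations \ref{intobs}, \ref{e^g}) go through because they use only that $h$ is decreasing, satisfies $h(z) \sim \log(1/z)$ as $z \to 0$, and decays exponentially at infinity.

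If a seed bound of the form $\Pr_p(I(R_u)) \le C^{\phi(R_u)} \exp(-\phi(R_u)\,h(aq))$ were available, the argument of Proposition \ref{integral} and the deduction of Theorem \ref{sharper} would go through with no essential change. The main obstacle, as flagged in the discussion preceding the conjecture, is that no such seed bound can hold. In modified bootstrap the minimum size of a percolating set of $[m]\times[n]$ is $\max\{m,n\}$, not $(m+n)/2$, and every permutation matrix percolates $[n]^2$ (an easy induction), so there are at least $n!$ minimum-size percolating configurations. Consequently
\[ \Pr_p\bigl(I([n]^2)\bigr) \;\ge\; n!\,p^n \;\approx\; (np/e)^n, \]
which strictly exceeds the target $\exp(-\phi(R)\,h(nq)) \approx (nq)^{2n}$ for small $q$. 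Hence a bound of the exact shape of Lemma \ref{seeds} is unavailable for the modified process, and the argument must be restructured.

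To circumvent this I would combine the only available structural constraint with a refined combinatorial count. The key observation is that if $A$ internally spans $R$ in modified bootstrap then $A$ must hit every row and every column of $R$: an entirely empty line can never be infected, since each of its sites would need a previously infected neighbour within the same empty line. This yields $\Pr_p(I(R)) \le \exp(-b\,h(aq))$ via the "every row hit" event alone (for $a \le b$), which delivers the correct contribution $b\cdot h(aq)$ but falls short by the additional $a\cdot h(aq)$ needed. To recover this missing factor one would need to exploit the row and column constraints jointly despite their positive correlation, or else to iterate by conditioning on the location of the first site whose horizontal and vertical neighbours both lie in $A$ and recursing on the residual rectangle, paying at most logarithmic combinatorial cost per step.

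The hardest step, and the crux of the conjecture, is making this refinement sharp enough. The argument of Proposition \ref{integral} tolerates only an additive slack of $O\bigl(\phi(R_u)(\log\log(1/q))^{O(1)}\bigr)$ in the seed exponent; a multiplicative constant strictly less than one, or a slack of order $\phi(R_u)\,(\log(1/q))^{1-\eps}$, would already degrade the $(\log n)^{-3/2+o(1)}$ factor in the conclusion. Formulating and proving so tight a seed estimate for modified bootstrap seems to require a genuinely new structural understanding of its small percolating configurations—how the $n!$ permutation-type saturations and their "thickenings" are distributed—and obtaining such an understanding is presumably what makes Conjecture \ref{mod} open.
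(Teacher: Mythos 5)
This is an open conjecture; the paper does not prove it, and your proposal (correctly) does not claim to either. Your diagnosis of the obstruction matches the paper's own discussion: the argument of Theorem~\ref{sharper} breaks down for the modified process precisely because no analogue of Lemma~\ref{seeds} is available, since an $m\times n$ rectangle can be internally spanned by as few as $\max\{m,n\}$ sites rather than $(m+n)/2$. The paper makes the missing ingredient more precise than you do: writing $F(m,n,x)$ for the number of minimal percolating sets of size $m+x$ in $[m]\times[n]$ (with $m\ge n$), it observes that Conjecture~\ref{mod} would follow from the present method together with the enumerative bound $F(m,n,x)\le n^{\,m-n+2x+o(n)}$. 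Your ideas about exploiting the joint row/column hitting constraint and conditioning on the first doubly-supported site are in the right spirit, but stop short of anything like a bound of that strength, and you are right that this is where the genuine difficulty lies.

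One factual error: it is not true that every permutation matrix percolates $[n]^2$ under modified bootstrap. In that process a new site is infected only if it has one already-infected neighbour in \emph{each} dimension, and for two isolated occupied sites this can happen only if they are diagonally adjacent, i.e.\ at $\ell^\infty$-distance $1$. A permutation whose plot contains no two such points, for instance $\{(1,2),(2,4),(3,1),(4,3)\}\subset[4]^2$, is a fixed point of the dynamics and does not span. Consequently your asserted lower bound $\Pr_p\bigl(I([n]^2)\bigr)\ge n!\,p^n$ is not justified. The correct statement, which is what the paper uses, is only that spanning sets of size $\max\{m,n\}$ exist while ``the proportion of such minimal-size sets which percolate is very small''; turning that qualitative remark into a sharp count (the conjectured bound on $F$) is exactly what remains open.
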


Given a rectangle $R$, we say that a set $A \subset R$ is a \emph{minimal percolating set} if $A$ spans $R$, but no proper subset of $A$ does so (see~\cite{minl}, for example). Given $m \ge n$ and $x \ge 0$, let $F(m,n,x)$ denote the number of minimal percolating sets of size $m + x$ in modified bootstrap percolation on $R = [m] \times [n]$. We remark that Conjecture~\ref{mod} would follow from the method of this paper, together with following bound:
$$F(m,n,x) \; \le \; n^{m - n + 2x + o(n)}.$$
Note that even if we restrict ourselves to `threshold' models, in which a vertex is infected if at least $r$ elements of its neighbourhood are infected, we still run into similar problems. Indeed, consider the model in which a vertex is infected if at least four of its eight neighbours (including diagonals) are infected. A typical seed $R$ is shaped like an octagon, and the number of infected sites used to fill $R$ (in the random process) is roughly $\phi(R)$ (which we define to be the number of external vertices plus the number of external edges), while the minimal number required to span $R$ is only $\phi(R)/2$.

Finally, returning to the standard bootstrap process, recall that Theorem~\ref{sharper} determines the second term of $p_c([n]^2,2)$ up to a $\poly(\log\log n)$-factor. We ask whether this error term can be removed.

\begin{prob}
Determine $\alpha \in [0,3]$, if it exists, such that
$$p_c([n]^2,2) \; = \; \frac{\pi^2}{18\log n} \, - \, \frac{(\log \log n)^{\alpha+o(1)}} {(\log n)^{3/2}}$$
as $n \to \infty$.
\end{prob}

As usual in bootstrap percolation, it would not be unreasonable to suspect that the upper bound in Theorem~\ref{sharper} is closer to the truth.

\end{document}